\theoremstyle{plain}
\newtheorem{thm}{Theorem}[section]
\newtheorem{lem}[thm]{Lemma}
\newtheorem{cor}[thm]{Corollary}
\newtheorem{conj}{Conjecture}
\newtheorem{claim}[thm]{Claim}
\theoremstyle{definition}
\newtheorem{defn}{Definition}
\theoremstyle{remark}
\newcommand\p{\mathbb{P}}
\newcommand\OO{\mathcal{O}}
\newcommand\pic{\text{Pic }}
\newcommand\Pic{\text{Pic}}
\newcommand\sym{\text{Sym}}
\title{Rationality of motivic zeta functions for curves with finite abelian group actions}
\author{Justin Mazur}
\address{Indiana University, 831 E. Third St., Bloomington, IN, 47405}
\begin{document}

\begin{abstract}

Let $\mathfrak{Var}_k^G$ denote the category of pairs
$(X,\sigma)$, where $X$ is a variety over $k$ and $\sigma$ is a
group action on $X$.  We define the Grothendieck ring for varieties
with group actions as the free abelian group of isomorphism classes
in the category $\mathfrak{Var}_k^G$ modulo a cutting and pasting
relation.  The multiplication in this ring is defined by the fiber
product of varieties.  This allows for motivic zeta-functions for
varieties with group actions to be defined.  This is a formal power
series $\sum_{n=0}^{\infty}[\text{Sym}^n (X,\sigma)]t^n$ with
coefficients in the Grothendieck ring.  The main result of this
paper asserts that the motivic zeta-function for an algebraic
curve with a finite abelian group action is rational.  This is a
partial generalization of Weil's First Conjecture.  AMS Classifications: 11, 14, 19.
\end{abstract}

\begin{keyword}
motivic zeta-functions \sep K-theory \sep Picard bundle \sep equivariant bundles \sep Weil conjectures \sep invariant theory
\end{keyword}

\maketitle

\section{Introduction}

Let $X$ be a variety over a finite field $\mathbb{F}$ and let Sym$^n X$ be the $n^{\text{th}}$ symmetric power of $X$.  B. Dwork's proof of Weil's First Conjecture states that the zeta-function
\[ Z_X (t) = \sum_{i=0}^{\infty} |\text{Sym}^n(X)(\mathbb{F})|t^n \]
is a rational function in t \cite{Dwo}.  Kapranov attempted to generalize this result to a general field $k$, by using the Grothendieck ring for varieties over $k$ \cite{Ka}.  Explicitly, let $K_0[\mathfrak{Var}_{k}]$ denote the ring of $\mathbb{Z}$-combinations of isomorphism classes of $k$-varieties modulo the cutting-and-pasting relation
\[ [X] = [Y] + [X \setminus Y] \] for closed $k$-varieties $Y \subset X$.  Kapranov asked whether the motivic zeta-function
\[ \zeta_X(t) = \sum_{i=0}^{\infty} [\text{Sym}^n(X)]t^n \] is rational as a power series in the ring $K_0[\mathfrak{Var}_{k}]$.  While Kapranov was able to prove this when X is a curve, this was shown not to hold for higher dimensional varieties by M. Larsen and V. Lunts \cite{Lu}.  N. Takahashi then conjectured a generalization of this result for curves.  He proposed that you could define a motivic zeta function for curves with finite cyclic group actions and that this motivic zeta function should be rational \cite{Tak}.

In this paper, we will establish an improvement of this conjecture by proving the rationality of motivic zeta functions for curves with finite abelian group actions.  Given an algebraic group $G$, we will construct the Grothendieck ring for varieties with $G$-actions in a way analogous to the construction of the Grothendieck ring for varieties.  From this, we may define motivic zeta-functions and investigate their rationality.  The main result of this paper is the following:

\begin{thm}  Let $G$ be a finite abelian group of order $r$, let $C$ be a non-singular projective curve over an algebraically closed field $k$ of characteristic $0$ or of positive characteristic $p$ with $p \nmid r$, and let $\sigma : G\times C \rightarrow C$ be a group action on $C$.  Then the motivic zeta function
\[ \zeta_{(C,\sigma)}(t) = \sum_{n=0}^{\infty} [\text{Sym}^n (C,\sigma)]t^n\] is rational.
\end{thm}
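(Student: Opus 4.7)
The plan is to imitate Kapranov's proof of the $G$-trivial case by splitting
\[
\zeta_{(C,\sigma)}(t) \;=\; \sum_{n=0}^{2g-2} [\text{Sym}^n(C,\sigma)]\, t^n \;+\; \sum_{n \geq 2g-1} [\text{Sym}^n(C,\sigma)]\, t^n,
\]
where $g$ is the genus of $C$. The first summand is a polynomial in $t$, so the task is to express the tail as a rational function by identifying $\text{Sym}^n(C,\sigma)$ with an equivariant projective bundle over $\text{Pic}^n(C)$ and evaluating its class via an equivariant analogue of $[\mathbb{P}(E)] = [B](1 + \mathbb{L} + \cdots + \mathbb{L}^{r-1})$.

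\textbf{Equivariant Abel--Jacobi setup.} First I would equip $\text{Pic}^n(C)$ with the $G$-action induced by pullback along $\sigma$ and verify, by functoriality of the Abel--Jacobi construction under automorphisms of $C$, that $\alpha_n : \text{Sym}^n C \to \text{Pic}^n(C)$ is $G$-equivariant. For $n \geq 2g-1$, Riemann--Roch yields $R^1\pi_*\mathcal{P}_n = 0$, where $\mathcal{P}_n$ is a Poincar\'e bundle on $\text{Pic}^n(C) \times C$ and $\pi$ is the projection to $\text{Pic}^n(C)$; then $E_n := \pi_*\mathcal{P}_n$ is a rank $n-g+1$ vector bundle and $\text{Sym}^n(C,\sigma) \cong \mathbb{P}(E_n)$ as $G$-varieties. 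The Poincar\'e bundle is only defined up to twist by a pullback from $\text{Pic}^n(C)$, but such a twist is invisible after projectivization, so the equivariant structure on $\mathbb{P}(E_n)$ descends cleanly from the natural diagonal action on $\text{Pic}^n(C) \times C$.

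\textbf{Equivariant projective bundle formula, and main obstacle.} The heart of the argument is computing $[\mathbb{P}(E_n)]$ in $K_0[\mathfrak{Var}_k^G]$. Because $G$ is abelian and $|G|$ is invertible in $k$, every equivariant coherent sheaf on a $G$-scheme decomposes \'etale-locally into character eigencomponents. I would stratify $\text{Pic}^n(C)$ into finitely many locally closed, $G$-stable pieces on which the fibrewise character type of $E_n$ is constant, and on each such stratum $B$ exhibit an equivariant trivialization $\mathbb{P}(E_n)|_B \cong B \times \mathbb{P}(V)$ for a fixed $G$-representation $V$ of dimension $n-g+1$, giving $[\mathbb{P}(E_n)|_B] = [B]\cdot[\mathbb{P}(V)]$ in the equivariant Grothendieck ring. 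Summing over strata and over $n$ then packages the tail into a finite collection of geometric series in $t$, which is rational. The main obstacle is exactly this stratification step: one has to control the fixed loci of subgroups of $G$ on $\text{Pic}^n(C)$, verify that the character types of the stalks of $E_n$ are constant along suitable pieces, and certify equivariant local triviality there. The hypotheses that $G$ is abelian and that $\mathrm{char}\, k \nmid |G|$ are what make the requisite semisimple decomposition of the representations on stalks and tangent spaces available.
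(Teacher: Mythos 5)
Your high-level strategy (pass to the Abel--Jacobi map, realize $\mathrm{Sym}^n C$ for large $n$ as an equivariant projectivized Picard bundle, and compute its class in the Grothendieck ring) is the right one, and it is the route the paper takes. But the proposal has a genuine gap at the one place that carries all the weight: it gives no mechanism relating $[\mathbb{P}(E_n)]$ for different $n$, so there is no reason the tail should telescope into a geometric series. You stratify $\Pic^n C$ for a fixed $n$, write $[\mathbb{P}(E_n)|_B] = [B]\cdot[\mathbb{P}(V_{n,B})]$ on each stratum $B$, and then say ``summing over $n$ packages the tail into finitely many geometric series.'' That last step is exactly the missing theorem. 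The representations $V_{n,B}$ have growing dimension $n-g+1$, the strata themselves depend on $n$, and a priori nothing connects the data at $n$ to the data at $n+1$. One needs a precise recursion, and that recursion is where the abelian hypothesis and the characteristic hypothesis actually get used.

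The paper supplies this recursion as follows. It fixes a single $n > 2g-2$ and a point $P\in C$ whose $G$-orbit has $r$ distinct points, giving $G$-equivariant isomorphisms $\phi_m : \Pic^n C \to \Pic^{n+rm}C$, $[D]\mapsto [D + m\sum_{g\in G} g\cdot P]$; all the bundles $E_m$ (with $\mathbb{P}E_m \cong \mathrm{Sym}^{n+rm}C$) are then pulled back to live over the \emph{same} base $\Pic^n C$. The key lemma is a $G$-equivariant split short exact sequence
\[
0 \to E_0 \to E_1 \to F \to 0,
\]
where $F$ (pushforward of $\mathcal{L}$ restricted to $\bigcup_g \{gP\}\times\Pic^n C$) is shown, via a variant of Maschke's theorem that needs $p\nmid r$, to be equivariantly the trivial bundle with fiber the regular representation $k[G]$. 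This yields the recursion $[E_m,\sigma_m] = [E_0,\sigma_0]\,[\mathbb{A}^r,\tau]^m$, and only \emph{then} does abelianness enter, to diagonalize $\tau$ into characters $\lambda_1,\ldots,\lambda_r$ so that the projectivizations can be cut along coordinate hyperplanes equivariantly.

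Two smaller cautions about your setup. First, the $G$-action on $E_n$ does not literally preserve fibers: it covers the nontrivial action on $\Pic^n C$, sending the fiber over $[D]$ to the fiber over $[h\cdot D]$. So a ``fibrewise character type'' is only meaningful over the fixed locus of a subgroup, and an equivariant local trivialization $\mathbb{P}(E_n)|_B \cong B\times\mathbb{P}(V)$ with $G$ acting diagonally is \emph{not} what is available in general; the paper instead works with ordinary trivializations over $G$-invariant affine opens (obtained by intersecting translates) and keeps track of the induced action, which need not be a product action on the nose. Second, writing $[\mathbb{P}(E)] = [B](1+\mathbb{L}+\cdots)$ hides the point that in $K_0(\mathfrak{Var}_k^G)$ there is no single Lefschetz class: the affine lines carry distinct characters $\lambda_j$, and the final rational expression genuinely involves the products $[\mathbb{A}^1,\lambda_j^{-1}\otimes\lambda_k]$, not powers of one $\mathbb{L}$.
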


To prove this we study the Picard bundle, Sym$^n X$ over
Pic$^n X$ with fiber $\mathbb{P}^{n-g}$.  This is the
projectivization of a vector bundle $E_n$ over Pic$^n X$.  As $n$
gets large, we can factor out affine spaces inside of $E_n$ which
have a group action given by the regular representation of $G$.
This allows us to break the vector bundles $E_n$ into products in
the Grothendieck ring for varieties with group actions.  Ultimately,
we are able to use this to break Sym$^n X$ into manageable pieces in
the Grothendieck ring as well.  This leads to representing the
motivic zeta function as a rational function.
\bigskip

\section{The Grothendieck ring for varieties with $G$-actions}

Let $G$ be a fixed algebraic group and let $\mathfrak{Var}^G_k$ denote
the category of varieties with $G$-actions.  The objects of this
category are pairs $(X,\sigma)$, where $X$ is an object of the category of $k$-varieties 
,$\mathfrak{Var}_k$, and $\sigma:G\times X\rightarrow X$ is an algebraic group
action of $G$ on $X$.  Morphisms of this category are
$G$-equivariant variety morphisms.  
 
\begin{defn}  The \emph{Grothendieck group for varieties with G-actions} is a free
abelian group modulo a single cutting and pasting relation given by $K_0(\mathfrak{Var}^G_k)=$ 
\[ \frac{\bigoplus \mathbb{Z}\cdot(\text{isomorphism class in }\mathfrak{Var}^G_k)}{
\langle [X,\sigma] - [X\backslash Y,\sigma] - [Y,\tau] |
(Y,\tau)\text{ closed $G$-invariant subspace of }(X,\sigma) \rangle}.\]  We define
multiplication by
\[ [X,\sigma][Y,\tau]:=[X\times Y,\sigma \times \tau] \]
with
\[
\begin{array}{cccc}
    \sigma\times \tau : & G\times X\times Y & \rightarrow & X\times Y \\
     & (g,x,y) & \mapsto & (\sigma(g,x),\tau(g,y)) \\
\end{array}
\]
making $K_0(\mathfrak{Var}^G_k)$ into a ring called the
\emph{Grothendieck ring for varieties with G-actions}.
\end{defn}

Now we can give the definition of a motivic zeta-function in this context.

 \begin{defn}  Let $(X,\sigma)\in \mathfrak{Var}^G_k$ and define
\[
\begin{array}{cccc}
  \bar{\sigma}: & G\times \sym^n(X) & \rightarrow & \sym^n(X) \\
   & (g,\sum P_i) & \mapsto & \sum \sigma(g,P_i) \\
\end{array}.
\]
Then define
\[ \sym^n(X,\sigma):=(\sym^n(X),\bar{\sigma}) \]
\end{defn}

\begin{defn}  Let $(X,\sigma)$ be an object of
$K_0(\mathfrak{Var}^G_k)$.  We define the \emph{motivic zeta
function of $(X,\sigma)$} by
\[ \zeta_{(X,\sigma)}(t):= \sum_{n=0}^{\infty}[\sym^n(X,\sigma)]t^n, \]
a formal power series in $K_0(\mathfrak{Var}^G_k)[[t]]$.
\end{defn}

\bigskip

\section{Rationality of motivic zeta-functions}

To investigate the rationality of motivic zeta-functions we must have an appropriate definition of rationality.

\begin{defn} Given a commutative ring $A$, a power series $f(t)\in A[[t]]$ is \emph{rational} if there exist polynomials $g(t), h(t) \in A[[t]]$ such that $f(t)$ is the unique solution of $g(t) x = h(t)$.  That is $x$ can be written formally as $x = \frac{h(t)}{g(t)}$.
\end{defn}

Before proving the main result of the paper, we will investigate the rationality of motivic zeta-functions for affine spaces with finite abelian group actions.  We start with the affine line.

\begin{lem}
Let $G$ be a finite group and let $\lambda : G\times \mathbb{A}^1
\rightarrow \mathbb{A}^1$ be a $1$ dimensional linear representation
of $G$ over an algebraically closed field $k$.  Then
\[ \zeta_{(\mathbb{A}^1,\lambda)}(t)=
\sum_{n=0}^{\infty}[\sym^n(\mathbb{A}^1,\lambda)]t^n\] is rational.
\end{lem}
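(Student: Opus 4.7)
The plan is to use the classical identification $\sym^n \mathbb{A}^1 \cong \mathbb{A}^n$ via the elementary symmetric polynomials $e_1,\ldots,e_n$, and to keep careful track of the induced $G$-action. Since $\lambda$ is one-dimensional it is given by a character $\chi : G \to k^\ast$, with $g\cdot x = \chi(g)x$. Passing to $S_n$-invariants in $k[x_1,\ldots,x_n] = k[e_1,\ldots,e_n]$, the diagonal action sends $g\cdot e_k = \chi(g)^k e_k$, so $\sym^n(\mathbb{A}^1, \lambda)$ is isomorphic as a $G$-variety to the direct product $\prod_{k=1}^{n} (\mathbb{A}^1, \chi^k)$. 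By the definition of multiplication in $K_0(\mathfrak{Var}_k^G)$ this forces
\[ [\sym^n(\mathbb{A}^1,\lambda)] = \prod_{k=1}^{n} [\mathbb{A}^1,\chi^k]. \]

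Since $G$ is finite, $\chi$ has some finite order $r$, so the classes $\mathbb{L}_j := [\mathbb{A}^1, \chi^j]$ depend only on $j$ modulo $r$. Writing $n = qr+s$ with $0 \le s < r$ then gives $[\sym^n(\mathbb{A}^1, \lambda)] = M^q N_s$, where $M := \mathbb{L}_0 \mathbb{L}_1 \cdots \mathbb{L}_{r-1}$ and $N_s := \mathbb{L}_1 \cdots \mathbb{L}_s$ (with the convention $N_0 = 1$). Splitting the power series along residue classes modulo $r$ now yields
\[ \zeta_{(\mathbb{A}^1,\lambda)}(t) = \sum_{q=0}^{\infty}\sum_{s=0}^{r-1} M^q N_s\, t^{qr+s} = \frac{\sum_{s=0}^{r-1} N_s t^s}{1 - M t^r}, \]
which is rational in the sense of Definition 3.1, since the denominator has constant term $1$ and is therefore invertible in $K_0(\mathfrak{Var}_k^G)[[t]]$.

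The argument is essentially a direct calculation, so I do not expect any serious obstacle. The only substantive content is identifying the induced $G$-action on the elementary symmetric polynomials and the observation that a diagonal linear action on $\mathbb{A}^n$ realises that $G$-variety, on the nose, as a product of one-dimensional representations, so that its class factors in the equivariant Grothendieck ring. Both are immediate from the definitions, and everything else is formal summation; the finite order of $\chi$, guaranteed by $G$ being finite, is exactly what makes the geometric-series telescoping work.
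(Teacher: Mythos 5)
Your proof is correct and follows essentially the same route as the paper: both identify $\sym^n\mathbb{A}^1$ with $\mathbb{A}^n$ via elementary symmetric functions, read off the diagonal action by $\chi,\chi^2,\ldots,\chi^n$ to factor $[\sym^n(\mathbb{A}^1,\lambda)]$ as $\prod_{k=1}^n[\mathbb{A}^1,\chi^k]$, and then split the power series into residue classes modulo the order $r$ to obtain a geometric series. The only cosmetic difference is that you let $r$ be the order of the character while the paper takes $r=|G|$; the computation is the same either way.
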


\begin{proof}
First notice that the following map
\[ \begin{array}{cccc}
     \phi : & \sym^n \mathbb{A}^1 & \rightarrow & \mathbb{A}^n \\
      & P_1 + \ldots + P_n & \mapsto & ( P_1 + \ldots + P_n, \sum_{i < j} P_i P_j, \ldots , P_1 P_2 \ldots P_n )
   \end{array}\]
is an isomorphism using the elementary symmetric functions in $n$
variables. This means that in the Grothendieck ring for varieties
with $G$-actions
\[ [\sym^n( \mathbb{A}^1,\lambda)]=[\mathbb{A}^1,\lambda][\mathbb{A}^1,\lambda^2]\ldots [\mathbb{A}^1,\lambda^n].\]
If the order of $G$ is $r$, then we also know that $\lambda^r=1$.
Now we can see that $\zeta_{(\mathbb{A}^1,\lambda)}(t)$ is rational:

\begin{eqnarray*}
\zeta_{(\mathbb{A}^1,\lambda)}(t) & = & \sum_{n=0}^{\infty}[\sym^n(\mathbb{A}^1,\lambda)]t^n \\
& = & \sum_{n=0}^{\infty} \left([\mathbb{A}^1,\lambda][\mathbb{A}^1,\lambda^2]\ldots [\mathbb{A}^1,\lambda^n]\right)t^n\\
& = & \sum_{k=0}^{r-1} \sum_{n=0}^{\infty} [\mathbb{A}^1,\lambda]\ldots[\mathbb{A}^1,\lambda^k] [\sym^r (\mathbb{A}^1, \lambda)]^n t^{k+n} \\
& = & \sum_{k=0}^{r-1} [\mathbb{A}^1,\lambda]\ldots[\mathbb{A}^1,\lambda^k]t^k \left( \sum_{n=0}^{\infty}[\sym^r (\mathbb{A}^1, \lambda)]^n t^n \right) \\
& = & \frac{1}{1-[\sym^r
(\mathbb{A}^1,\lambda)]t}\left(\sum_{k=0}^{r-1}
[\mathbb{A}^1,\lambda]\ldots[\mathbb{A}^1,\lambda^k]t^k \right) .
\end{eqnarray*}
\end{proof}

This allows us to prove the rationality of motivic zeta-functions for general affine spaces with finite abelian group actions.

\begin{thm}  Let $G$ be a finite abelian group of order $r$ and let $\sigma : G \times \mathbb{A}^k \rightarrow \mathbb{A}^k$ be a linear representation.  Then \[ \zeta_{(\mathbb{A}^k,\sigma)}(t)= \sum_{n=0}^{\infty}[\sym^n(\mathbb{A}^k,\sigma)]t^n \] is rational.
\end{thm}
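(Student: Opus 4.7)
The plan is to reduce to the one-dimensional case handled by Lemma 3.1 via a diagonalization followed by an inductive stratification argument on $k$.

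First, since $G$ is finite abelian of order $r$ and (implicit from the main theorem) the characteristic of $k$ does not divide $r$, Maschke's theorem applies: one obtains a $G$-equivariant decomposition $\mathbb{A}^k \cong L_1 \oplus \cdots \oplus L_k$ into one-dimensional representations $L_i$ with characters $\lambda_i$. I would then induct on $k$, with the base case $k=1$ provided by Lemma 3.1.

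For the inductive step, write $\mathbb{A}^k = \mathbb{A}^{k-1} \times L_k$ as $G$-varieties, where $\mathbb{A}^{k-1}$ carries the truncated representation $\sigma' = \lambda_1 \oplus \cdots \oplus \lambda_{k-1}$. The $G$-equivariant projection $\mathbb{A}^k \to L_k$ induces a $G$-equivariant morphism $\pi_n : \sym^n(\mathbb{A}^k) \to \sym^n(L_k) \cong \mathbb{A}^n$. I would stratify the target by partition type: for each partition $\lambda = (1^{r_1} 2^{r_2} \cdots) \vdash n$, let $S_\lambda \subset \mathbb{A}^n$ be the locally closed stratum of multisets with exactly $r_i$ distinct points of multiplicity $i$. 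Over $S_\lambda$ the preimage $\pi_n^{-1}(S_\lambda)$ is a $G$-equivariant, locally trivial fiber bundle with fiber $\prod_i \sym^i(\mathbb{A}^{k-1})^{r_i}$, so in $K_0(\mathfrak{Var}_k^G)$ the class $[\pi_n^{-1}(S_\lambda)]$ is the product of $[S_\lambda]$ (with its inherited $G$-action from $\sym^n(L_k,\lambda_k)$) and $\prod_i [\sym^i(\mathbb{A}^{k-1},\sigma')]^{r_i}$.

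Summing over $\lambda \vdash n$ and then over $n$, the series $\zeta_{(\mathbb{A}^k,\sigma)}(t)$ should be expressible as an arrangement of the rational series $\zeta_{(\mathbb{A}^{k-1},\sigma')}(t)$ (rational by induction) and $\zeta_{(L_k,\lambda_k)}(t)$ (rational by Lemma 3.1), together with combinatorial factors tracked by the partition stratification. Because the character group of $G$ is periodic of order $r$, these sums collapse, just as they did in the proof of the lemma, into a finite number of geometric series, yielding the sought rationality.

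The hardest part is this last assembly step: the stratification produces an a priori infinite family of interacting contributions indexed by partitions, and one must reorganize them, presumably via a Macdonald/Göttsche-style plethystic identity tracking the equivariant data, so that the result is manifestly a rational function in $K_0(\mathfrak{Var}_k^G)[[t]]$ rather than just a formal sum of rational-looking pieces. Verifying equivariant local triviality of the fiber bundle over each stratum, so that the product formula for $[\pi_n^{-1}(S_\lambda)]$ actually holds in the $G$-equivariant Grothendieck ring, is the technical core of the argument.
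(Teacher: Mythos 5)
Your starting point is the same as the paper's: diagonalize the representation, $\mathbb{A}^k \cong \bigoplus_i L_i$ with characters $\lambda_i$, and aim to reduce to Lemma~3.1. But from there you take a substantially harder route, and the two places you yourself flag as ``the technical core'' and ``the hardest part'' are genuine gaps, not details.

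The paper's key step (Claim~3.4) invokes a theorem of Totaro from the appendix to G\"ottsche's paper \cite{Go}: the projection
$\sym^n(\mathbb{A}^1 \times \mathbb{A}^{k-1}) \to \sym^n(\mathbb{A}^1)$
is a \emph{Zariski}-locally trivial fiber bundle with fiber $\sym^n(\mathbb{A}^{k-1})$, over \emph{all} of $\sym^n(\mathbb{A}^1)$, with no stratification by partition type. After checking $G$-equivariance of the projection, this gives at one stroke the clean product formula
$[\sym^n(\mathbb{A}^k,\sigma)] = [\sym^n(\mathbb{A}^1,\lambda_1)]\cdots[\sym^n(\mathbb{A}^1,\lambda_k)]$
in $K_0(\mathfrak{Var}_k^G)$ for every $n$. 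Once you have that, the rationality drops out by exactly the same periodicity argument as in Lemma~3.1: each factor is $\prod_{j=1}^n[\mathbb{A}^1,\lambda_i^j]$ and $\lambda_i^r=1$, so the generating series collapses into a single finite geometric series. No plethysm, no partitions, no infinite assembly.

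Your proposal replaces this with a stratification of $\sym^n(L_k)\cong\mathbb{A}^n$ by partition type. Two concrete problems. First, the claimed $G$-equivariant Zariski-local triviality of $\pi_n^{-1}(S_\lambda)\to S_\lambda$ with fiber $\prod_i\sym^i(\mathbb{A}^{k-1})^{r_i}$ is not automatic: moving in $S_\lambda$ permutes the distinct support points within a multiplicity class, so the naive ``bundle'' has a nontrivial finite structure group permuting the factors. Establishing Zariski-local triviality here is essentially reproving Totaro's theorem stratum by stratum, i.e.\ you would need the ingredient you are trying to avoid. Second, even granting the product formula over each stratum, the reassembly $\sum_n\sum_{\lambda\vdash n}[\cdots]t^n$ is not a closed computation; you gesture at a ``Macdonald/G\"ottsche-style plethystic identity'' but do not supply one, and there is no reason such an identity would simplify to a rational expression without the uniformity that Totaro's theorem provides. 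The paper simply does not face this problem, because Totaro gives the product globally rather than stratum-wise.

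In short: the decisive fact you are missing is Totaro's result that $\sym^n$ of a Zariski-locally trivial affine bundle is again Zariski-locally trivial. Without it, your stratification converts a clean induction into an open-ended combinatorial reorganization that you don't carry out, so the proposal does not constitute a proof.
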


Since $G$ is abelian and $k$ is algebraically closed we have that
\[[\mathbb{A}^k,\sigma]=[\mathbb{A}^1,\lambda_1]\ldots [\mathbb{A}^1,\lambda_k] = [\mathbb{A}^k,(\lambda_1,\ldots , \lambda_k)], \]
where $\lambda_i$ are $1$ dimensional representations of $G$.

\begin{claim} $[\sym^n (\mathbb{A}^k,(\lambda_1,\ldots , \lambda_k))]= [\sym^n(\mathbb{A}^1,\lambda_1)]\ldots [\sym^n(\mathbb{A}^1,\lambda_k)]$.
\end{claim}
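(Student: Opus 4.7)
The plan is to proceed by induction on $k$, with the base case $k=1$ tautological. For the inductive step I factor $(\mathbb{A}^k,(\lambda_1,\ldots,\lambda_k))=(\mathbb{A}^{k-1},(\lambda_1,\ldots,\lambda_{k-1}))\times(\mathbb{A}^1,\lambda_k)$ as $G$-varieties and study the $G$-equivariant projection $\pi:\sym^n(\mathbb{A}^k)\to\sym^n(\mathbb{A}^1)$ onto the last coordinate. The idea is that stratifying the base $\sym^n(\mathbb{A}^1)$ by multiplicity pattern turns the fibers of $\pi$ into products of smaller symmetric powers, so the inductive hypothesis can be applied fiberwise.

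Concretely, for each partition $\mu=(\mu_1,\ldots,\mu_r)$ of $n$, let $S_\mu\subset\sym^n(\mathbb{A}^1)$ be the locally closed stratum of configurations whose $r$ distinct points carry multiplicities $\mu_1,\ldots,\mu_r$. The fiber of $\pi$ over a point of $S_\mu$ is naturally identified with $\prod_{i=1}^{r}\sym^{\mu_i}(\mathbb{A}^{k-1})$: a lift assigns to each of the $r$ distinct last-coordinate values an unordered $\mu_i$-tuple of points in $\mathbb{A}^{k-1}$. Assuming one can establish the multiplicative factorization
\[ [\pi^{-1}(S_\mu)]=[S_\mu]\cdot\prod_{i=1}^{r}[\sym^{\mu_i}(\mathbb{A}^{k-1},(\lambda_1,\ldots,\lambda_{k-1}))] \]
in $K_0(\mathfrak{Var}^G_k)$, the inductive hypothesis rewrites each fiber factor as $\prod_{j<k}[\sym^{\mu_i}(\mathbb{A}^1,\lambda_j)]$. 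Summing over all partitions $\mu\vdash n$ and iteratively applying the Kapranov-type identity $[\sym^n(X\sqcup Y)]=\sum_{i+j=n}[\sym^{i}X][\sym^{j}Y]$ coordinate by coordinate, the combined sum should reassemble into $\prod_{j=1}^{k}[\sym^n(\mathbb{A}^1,\lambda_j)]$, yielding the claim.

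The hard part will be the bundle factorization $[\pi^{-1}(S_\mu)]=[S_\mu]\cdot[\text{fiber}]$ in the $G$-equivariant Grothendieck ring: the fiber bundle over $S_\mu$ is generally not Zariski-locally trivial, and the scissor relations alone do not automatically give multiplicativity. One would need to produce an explicit $G$-equivariant étale-local trivialization, plausibly by using elementary symmetric functions in the last coordinate to separate the $r$ groups of points and then globalizing via additional cutting and pasting. A secondary subtlety is to track how the diagonal $G$-action descends to the fiber $\prod_i\sym^{\mu_i}(\mathbb{A}^{k-1})$ in a form compatible with the $(\lambda_1,\ldots,\lambda_{k-1})$ action appearing in the inductive hypothesis.
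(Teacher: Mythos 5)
Your proposal takes a genuinely different route from the paper, and it has a real gap at exactly the point you flag as ``the hard part.'' The paper does not stratify by multiplicity pattern at all: it directly cites Totaro's theorem (from the appendix to Göttsche's paper, reference \cite{Go}) that the projection $\sym^n(\mathbb{A}^1\times\mathbb{A}^{k-1})\to\sym^n(\mathbb{A}^1)$ is a Zariski-locally trivial fiber bundle with fiber $\sym^n(\mathbb{A}^{k-1})$, which immediately yields $[\sym^n(\mathbb{A}^k)]=[\sym^n(\mathbb{A}^1)][\sym^n(\mathbb{A}^{k-1})]$; it then observes this bundle map is $G$-equivariant and inducts. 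The global Zariski-local triviality is the whole engine, and it is precisely the fact you would need but do not have over each stratum.

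The gap in your approach is not merely that the stratum-wise trivialization is hard to establish --- the proposed identity $[\pi^{-1}(S_\mu)]=[S_\mu]\cdot\prod_i[\sym^{\mu_i}(\mathbb{A}^{k-1},(\lambda_1,\ldots,\lambda_{k-1}))]$ is actually false. The monodromy permuting the $r$ distinct base points of a configuration in $S_\mu$ does not just obstruct Zariski triviality; it changes the class. You can see this already for $n=k=2$. Write $a=[\mathbb{A}^1,\lambda_1]$, $a'=[\mathbb{A}^1,\lambda_1^2]$, $b=[\mathbb{A}^1,\lambda_2]$, $b'=[\mathbb{A}^1,\lambda_2^2]$. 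The claim (via Totaro) gives $[\sym^2(\mathbb{A}^2,(\lambda_1,\lambda_2))]=aa'bb'$. Your stratification into $S_{(2)}$ (the diagonal, class $b$, fiber $\sym^2(\mathbb{A}^1,\lambda_1)$ of class $aa'$) and $S_{(1,1)}$ (class $bb'-b$, alleged fiber $(\mathbb{A}^1,\lambda_1)^2$ of class $a^2$) would give
\[ b\cdot aa' + (bb'-b)\cdot a^2, \]
and the difference from the correct answer is
\[ aa'bb' - baa' - (bb'-b)a^2 = a(a'-a)\cdot b(b'-1), \]
which is nonzero whenever $\lambda_1$ and $\lambda_2^2$ are nontrivial (one can separate $[\mathbb{A}^1,1]$ from $[\mathbb{A}^1,\lambda]$ by the class of the fixed locus). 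So the twist is a genuine contribution, and no amount of cutting and pasting within the stratum alone will recover it. Consequently the reassembly step also cannot work as written; in any case the identity you invoke, $[\sym^n(X\sqcup Y)]=\sum_{i+j=n}[\sym^iX][\sym^jY]$, concerns disjoint unions, whereas $\mathbb{A}^k=\mathbb{A}^{k-1}\times\mathbb{A}^1$ is a product, so it does not apply here. The fix is to abandon the stratification and use Totaro's result, as the paper does.
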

\begin{proof}
Let $p: \mathbb{A}^1 \times \mathbb{A}^{k-1} \rightarrow
\mathbb{A}^1$ be the projection map.  This induces a map $p^*:
\sym^n (\mathbb{A}^1 \times \mathbb{A}^{k-1}) \rightarrow \sym^n
(\mathbb{A}^1)$.  B. Totaro proved in \cite{Go} that this map is a
$\sym^n (\mathbb{A}^{k-1})$ bundle on $\sym^n (\mathbb{A}^1)$ with
trivializations in the Zariski topology, giving us that 
\[ [\sym^n (\mathbb{A}^k)]= [\sym^n (\mathbb{A}^1)][\sym^n (\mathbb{A}^{k-1})]\]
in the Grothendieck ring for varieties.

This bundle is $G$-equivariant because the following diagram
commutes:
\[ \xymatrix{
G \times \sym^n (\mathbb{A}^1 \times \mathbb{A}^{k-1}) \ar[rr]^{(\lambda_1,\ldots,\lambda_k)} \ar[d]^{id_G \times p^*} & & \sym^n (\mathbb{A}^1 \times \mathbb{A}^{k-1}) \ar[d]^{p^*} \\
G \times \sym^n (\mathbb{A}^1) \ar[rr]^{\lambda_1} &  & \sym^n
(\mathbb{A}^1)}. \]

But since the action of $G$ on $\mathbb{A}^k$ reduces into $1$
dimensional subrepresentations, we see that
\[ [\sym^n (\mathbb{A}^k,(\lambda_1,\ldots ,\lambda_k) )]= [\sym^n (\mathbb{A}^1,\lambda_1)][\sym^n (\mathbb{A}^{k-1},(\lambda_2,\ldots ,\lambda_k))].\]
We continue this process inductively to get the desired result.
\end{proof}

Now we prove that $\zeta_{(\mathbb{A}^k,\sigma)}(t)=
\sum_{n=0}^{\infty}[\sym^n(\mathbb{A}^k,\sigma)]t^n$ is rational
just as we did when $k=1$.
\begin{eqnarray*}
\zeta_{(\mathbb{A}^k,\sigma)}(t) & = & \sum_{n=0}^{\infty}[\sym^n(\mathbb{A}^k,\sigma)]t^n\\
& = & \sum_{n=0}^{\infty} [\sym^n(\mathbb{A}^1,\lambda_1)]\ldots [\sym^n(\mathbb{A}^1,\lambda_k)] t^n\\
& = & \frac{1}{1- [\sym^n (\mathbb{A}^k,\sigma)]t} \left(
\sum_{l=0}^{r-1} \prod_{i=1}^k [\mathbb{A}^1,\lambda_i] \ldots
[\mathbb{A}^1,\lambda_i^l]t^l \right) .
\end{eqnarray*}

We now reach the main result of the paper: to prove the rationality of motivic zeta-functions for curves with finite abelian group actions.  The remainder of this paper will dedicated to this proof.

\begin{thm}
\label{thm:main} Let $G$ be a finite abelian group of order $r$, let
$C$ be a curve of genus $g$ over an algebraically closed field $k$
of characteristic $0$ or of positive characteristic $p$ with $p
\nmid r$, and let $\sigma : G\times C \rightarrow C$ be a $G$ action
on $C$. Then $\zeta_{(C,\sigma)}(t) = \sum_{n=0}^{\infty} [\sym^n
(C,\sigma)]t^n$ is rational.
\end{thm}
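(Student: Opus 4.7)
My plan is to equivariantize Kapranov's original rationality argument for curves by using the Abel--Jacobi map.  The first step is to pass from $\sym^n C$ to $\text{Pic}^n C$: for $n \geq 2g-1$ the map $\pi_n : \sym^n C \to \text{Pic}^n C$ is a projective bundle, namely $\sym^n C = \mathbb{P}(E_n)$ where $E_n = (\pi_n)_* \mathcal{L}_n$ is the pushforward of a Poincar\'e line bundle and has rank $n-g+1$.  Because $G$ acts on $C$, hence on $\text{Pic}^n C$ by pushforward of divisors, the whole picture is $G$-equivariant and $E_n$ inherits a canonical $G$-equivariant structure.  Moreover $\text{Pic}^n C$ is a $J_C$-torsor, so its class in $K_0(\mathfrak{Var}^G_k)$ is independent of $n$; I will denote this class by $[J_{(C,\sigma)}]$.

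Second, because $|G|=r$ is invertible in $k$ and $G$ is abelian, Maschke averaging produces a canonical $G$-equivariant isotypic decomposition $E_n = \bigoplus_{\chi \in \hat G} E_n^\chi$ into sub-bundles indexed by characters of $G$.  Next I would exhibit a $G$-invariant effective divisor $D_0 \subset C$ of degree $r$ (for instance a sum of $G$-orbits, with multiplicities chosen to compensate for stabilizers and ramification, using the hypothesis $p\nmid r$), and tensor by $\OO(D_0)$ to obtain a $G$-equivariant short exact sequence of vector bundles $0 \to E_n \to E_{n+r} \to V \to 0$ on $\text{Pic}^{n+r} C$, where $V$ is a trivial bundle of rank $r$ whose fiber is the regular representation $k[G]$.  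Maschke splits this sequence, so $E_{n+r} \cong E_n \oplus V$ equivariantly after identifying the two bases by the translation map $L \mapsto L \otimes \OO(D_0)$.

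Third, I would pass from vector bundles back to symmetric powers via equivariant projectivization.  Since $V$ is trivial with regular-representation fiber, $\mathbb{P}(E_{n+r})$ admits a canonical $G$-equivariant stratification: the closed locus where the $V$-component vanishes is a copy of $\mathbb{P}(E_n)$, and its complement is an affine bundle over $\mathbb{P}(E_n)$ whose fiber is an affine space with a linear $G$-action.  By Lemma 3.2 and Theorem 3.3 such affine classes are controlled and contribute rationally to the zeta-function, so combining strata yields a recursion expressing $[\sym^{n+r}(C,\sigma)]$ in terms of $[\sym^n(C,\sigma)]$, $[J_{(C,\sigma)}]$, and classes of affine spaces with linear abelian $G$-action.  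Summing the geometric series in $t^r$, and separately accounting for the finitely many terms with $n < 2g-1$ as a polynomial prefactor, gives a rational expression for $\zeta_{(C,\sigma)}(t)$.

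The main obstacle is the projectivization step: in $K_0(\mathfrak{Var}^G_k)$ the class of an equivariant $\mathbb{P}(E)$ does not factor as cleanly as in the non-equivariant Grothendieck ring, because the $G$-action on the fibers depends on which isotypic summand the line belongs to.  My strategy for this is to work with the full isotypic decomposition of $E_n$ and stratify $\mathbb{P}(E_n)$ by the vanishing loci of isotypic components, reducing everything to $G$-equivariant affine bundles whose classes are handled by the earlier affine results.  The subsidiary technical steps --- producing the invariant divisor $D_0$ and verifying that the cokernel $V$ really carries the regular representation, which is a local computation at each point of $D_0$ using the local normal form of the $G$-action --- are more routine once $p\nmid r$ is in force.
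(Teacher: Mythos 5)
Your outline follows the paper's strategy very closely: both replace $\sym^{n}C$ for $n$ large by the projectivization of a Picard-type bundle $E_n$ on (a fixed copy of) $\Pic^{n}C$, both exhibit a $G$-equivariant short exact sequence $0\to E_n\to E_{n+r}\to V\to 0$ coming from adding an orbit divisor, both identify the cokernel with the regular representation, both invoke a Maschke-type averaged splitting, both exploit abelianness to diagonalize the regular representation into characters, and both close with a geometric series in $t^r$. However, there are two places where the proposal as written would break, and both are exactly the places where the paper's proof spends most of its effort.

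First, the geometric description of the complement is wrong. The complement of $\mathbb{P}(E_n)$ inside $\mathbb{P}(E_n\oplus V)$ is not an affine bundle over $\mathbb{P}(E_n)$; rather, the lines not contained in $E_n$ project onto lines in $V$, so the complement is a vector bundle over $\mathbb{P}(V)\cong\mathbb{P}^{r-1}\times\Pic^n C$, not over $\mathbb{P}(E_n)$. This is precisely why the abelian hypothesis is needed at this step, and why the paper does not strip off $\mathbb{P}(E_n)$ all at once: instead it splits $V$ into one-dimensional $G$-eigenlines $\lambda_1,\dots,\lambda_r$ (and repeats this $m$ times for $E_{n+rm}$), then removes one hyperplane $\{x_{ij}=0\}$ at a time. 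At each step the open piece $\{x_{ij}\neq 0\}$ is an affine space (the full vector bundle $E_n$, not its projectivization, times a trivial affine factor) with the $G$-action twisted by $1/\lambda_j$; only after this one-coordinate-at-a-time peeling do the affine results apply. You gesture at this correction in your last paragraph, but the ``affine bundle over $\mathbb{P}(E_n)$'' picture is the step you would actually have to replace.

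Second, the global claims about splitting and trivializing are not justified and, as stated, are stronger than what is available. The sequence $0\to E_n\to E_{n+r}\to V\to 0$ need not split globally (the obstruction lives in $H^1(\Pic^n C, V^\vee\otimes E_n)$), and in particular $V$ is not a globally trivial bundle with fiber the regular representation on all of $\Pic^n C$; that identification with $A\otimes_k k[G]$ holds over an affine $G$-invariant trivializing open $U$, and this is where the hypothesis $p\nmid r$ is used in the averaging. Because $\Pic^n C$ is a nontrivial abelian variety and the bundles are not globally trivial, you cannot read off $[E_{n+r}]=[E_n][\mathbb{A}^r,\tau]$ directly in $K_0(\mathfrak{Var}^G_k)$; you must construct a $G$-invariant affine open $U$ over which all the bundles (including the kernel of the averaged projection) are trivial, prove the identity over $U$, and then run a Noetherian-induction cut-and-paste on the complement $\Pic^n C\setminus U$, making each successive open set affine, $G$-invariant, and trivializing (this requires care when the complement becomes reducible, since you need to intersect and also remove the $G$-translates of the pairwise intersections of components). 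This reduction, which your proposal omits entirely, is the bulk of the actual argument and is what turns the local Maschke computation into an identity in the Grothendieck ring.
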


\section{Construction of a vector bundle associated to $\sym^n C$}

The first step in proving this is to construct a vector  bundle $E_n$ with a fiber preserving group action $\sigma$ so that
\[ [\mathbb{P}(E_n,\sigma)] = [\text{Sym}^n (C,\sigma)] ,\] as elements of the Grothendieck ring for varieties with $G$-actions.

Fix $n>2g-2$ and let $\mathcal{L}$ be the Poincar\'{e} (or universal) line bundle
on $C\times \Pic^n C$, with $\mathcal{L}|_{C\times \{[D]\} } \cong \mathcal{O}(D)$ as a line bundle on $C\times \{[D]\}$.  If $p:C \times
\Pic^n C \rightarrow \Pic^n C$ is the projection map, $E_0 = p_*
\mathcal{L}$ is a vector bundle on $\Pic^n C$ so that the
projectivization of $E_0$ is the Picard bundle, i.e. the $\p^{n-g}$
bundle $\sym^n C$ on $\Pic^n C$ \cite{Ar}.  Note that to consider $\Pic^n C$ as a variety it is identified with $\Pic^0 C$, which is an abelian variety.  Let $\varpi_0 : E_0 \rightarrow
\Pic^n C$ denote this vector bundle.  Given a point $[D]$ in $\Pic^n
C$ we have that the fiber of this bundle over $[D]$ is given by \[
\varpi_0^{-1}([D]) = H^0(C,\OO(D)) .\] 

Similarly for all positive integers $m$, we can construct a vector bundle on $\Pic^{n+rm} C$ whose
projectivization is $\sym^{n+rm} C$.  Without loss of generality, assume there is a point $P\in C$ so that
the orbit of $P$, $\{\sigma(g,P)\}=\{gP\}_{g\in G}$, is a set of $r$
distinct points of $C$.  Take note that the curve $C$ does in fact
have points because the field $k$ is algebraically closed.  If there
is no such point, then there is a maximal non-trivial normal
subgroup $H$ of $G$ so that $\sigma$ factors through the quotient
group $G/H$.  If $\sigma' : G/H \times C \rightarrow C$ is this
group action and $\zeta_{(C,\sigma ')}(t)$ is rational, then
$\zeta_{(C,\sigma )}(t)$ must also be rational.

Using this point $P$, we define the following map for every $m$,
\[ \begin{array}{cccc}
     \phi_m : & \Pic^n C & \rightarrow & \Pic^{n+rm} C \\
      & [D] & \mapsto & [D + m \sum_{g\in G} \sigma(g,P)]
   \end{array}.\]
This map is a $G$-equivariant isomorphism between $\Pic^n C$ and $\Pic^{n+rm} C$.  For notational simplicity, from now on we will write $g\cdot P = \sigma(g, P)$ for an element of the group acting on a point $P$ of the curve.  If $E_m$ is a vector bundle on $\Pic^{n+rm} C$ whose projectivization is $\sym^{n+rm} C$, we define $\varpi_m : E_m \rightarrow \Pic^n C$ as a vector bundle by composing with $\phi_m^{-1}$.  
\[ \xymatrix{
E_m \ar[d] \ar[rd]^{\varpi_m} &  \\
\Pic^{n+rm} C \ar[r]^{\phi_m^{-1}} & \Pic^n C }. \]
Given a point $[D]$ in $\Pic^n C$ we have
that the fiber of this bundle over $[D]$ is given by
\[ \varpi_m^{-1}([D]) = H^0 (C, \OO(D + m \sum_{g\in G} g\cdot P)) .\]
Summarizing, we have defined a collection of vector bundles $E_m$
over $\Pic^n C$ so that
\[ \mathbb{P}E_m \cong \sym^{n+rm} C.\]

Now we define a group action on these vector bundles which are
compatible with the group actions on $\sym^{n+rm} C$.  If $x$ is an
element of the vector bundle $E_m$, then it is an element of the
vector space $H^0(C,\OO(D + m\sum g\cdot P))$ over some point $[D]$ in
$\Pic^n C$.  Given this we define $\sigma_m : G\times E_m
\rightarrow E_m$ in the following way.  Given $h$ in $G$ let
$\sigma_h := \sigma(h, - ):C \rightarrow C$ be an automorphism of
$C$.  Then define $\sigma_m(h,f)=f\circ \sigma_{h^{-1}}$, where $f$
is a global section of $\OO(D+ m \sum g\cdot P)$.  It is clear that this
defines a group action on $E_m$.

\begin{claim}
The group action $\sigma_m: G\times E_m \rightarrow E_m$ preserves
fibers of the vector bundle $\varpi_m: E_m \rightarrow \Pic^n C$.
\end{claim}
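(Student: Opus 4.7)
The plan is to show that for each $h \in G$, the map $\sigma_m(h,-)\colon E_m \to E_m$ carries the fiber of $\varpi_m$ over a point $[D] \in \Pic^n C$ to the fiber over $[h \cdot D]$, where $G$ acts on $\Pic^n C$ by $h \cdot [D] = [h \cdot D]$ (i.e.\ pullback of line bundles by the automorphism $\sigma_h$). Concretely, I would fix $f$ in the fiber over $[D]$, so that $f \in H^0(C, \OO(D + m \sum_{g \in G} g \cdot P))$, i.e.\ $(f) + D + m \sum_{g \in G} g \cdot P \geq 0$ as divisors on $C$, and then compute the divisor of $f \circ \sigma_{h^{-1}}$.

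Since $\sigma_{h^{-1}}$ is an automorphism of $C$ with inverse $\sigma_h$, pullback of divisors satisfies $\sigma_{h^{-1}}^*(Q) = h \cdot Q$ for any point $Q \in C$, extended linearly. Applying this to the inequality above yields
\[ (f \circ \sigma_{h^{-1}}) + h \cdot D + m \sum_{g \in G} (hg) \cdot P \geq 0. \]
The decisive observation is that left multiplication by $h$ is a bijection of $G$, so $\sum_{g \in G} (hg) \cdot P = \sum_{g \in G} g \cdot P$ as divisors on $C$. Hence $f \circ \sigma_{h^{-1}}$ is a section of $\OO(h \cdot D + m \sum_{g \in G} g \cdot P)$, which is exactly $\varpi_m^{-1}([h \cdot D])$. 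Linearity on each fiber is automatic, since precomposition with a fixed isomorphism is a $k$-linear map of global sections, so $(E_m, \sigma_m)$ is in fact an equivariant vector bundle covering the above $G$-action on $\Pic^n C$.

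The only point requiring care is the direction of pullback: the formula $\sigma_m(h,f) = f \circ \sigma_{h^{-1}}$ uses $\sigma_{h^{-1}}$ precisely so that $\sigma_m$ is a genuine left action, and correspondingly divisor pullback produces $h \cdot D$ rather than $h^{-1} \cdot D$. Beyond this bookkeeping the argument is purely formal, relying only on the $G$-invariance of the orbit sum $\sum_{g \in G} g \cdot P$; no use is yet made of the finiteness or abelianness of $G$, nor of the hypothesis $p \nmid r$.
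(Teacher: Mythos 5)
Your proposal is correct and follows essentially the same route as the paper: compute the divisor of $f \circ \sigma_{h^{-1}}$, use $(f) + D + m\sum_{g} g\cdot P \geq 0$, and conclude the image lies in $H^0(C,\OO(h\cdot D + m\sum_g g\cdot P))$. The only cosmetic difference is that you make the $G$-invariance of the orbit divisor $\sum_{g\in G} g\cdot P$ an explicit step, whereas the paper absorbs it silently when rewriting $h\cdot(\ldots + m\sum_g g\cdot P)$; the underlying argument is identical.
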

\begin{proof}
To prove this we must prove that the following diagram commutes:
\[ \xymatrix{
G \times E_m \ar[d]^{id_G \times \varpi_m} \ar[r]^{\sigma_m} & E_m \ar[d]^{\varpi_m} \\
G \times \Pic^n C \ar[r]^{\tilde{\sigma}} & \Pic^n C }, \]
where $\tilde{\sigma}$ is the group action on $\Pic^n C$ defined by
$\tilde{\sigma}(h, \sum n_i P_i) = \sum n_i (h\cdot P_i)$.

If $f \in H^0(C, \OO(D+ m \sum g\cdot P))$ and $h \in G$, then
$\sigma_m(h,f)=f\circ \sigma_{h^{-1}}$.  We need to show that
$f\circ \sigma_{h^{-1}}$ is a global section of $\OO(h\cdot D + m
\sum g\cdot P)$.  We start by considering the divisor of $f$, say $(f) =
\sum P_i - \sum Q_i$, where each $P_i$ is a root of $f$ and each
$Q_i$ is a pole of $f$.  Now, we see that
 \[(f \circ \sigma_{h^{-1}}) =  \sum h\cdot P_i - \sum h\cdot Q_i,\] 
meaning that the zeros
of $f \circ \sigma_{h^{-1}}$ are $h\cdot P_i$ and the poles are $h\cdot Q_i$.
But since $f \in H^0(C, \OO(D+ m \sum g\cdot P))$, we have that 
\[(f) + D+ m \sum_{g\in G} g\cdot P \geq 0,\] 
so that 
\[\sum P_i - \sum Q_i + D+ m \sum_{g\in G} g\cdot P\geq 0.\]  
But from this we get that
\begin{eqnarray*}
& & (f\circ \sigma_{h^{-1}}) + h\cdot D + m \sum_{g\in G} g\cdot P  \\
& = & \sum h\cdot P_i - \sum h\cdot Q_i + h\cdot D + m \sum_{g\in G} g\cdot P  \\
& = & h\cdot \left(\sum P_i - \sum Q_i + D + m \sum_{g\in G} g\cdot P \right) \geq 0,
\end{eqnarray*}
because if $D'$ is an effective divisor then so is $h \cdot D'$.
Therefore,
\[ \sigma_m(h,f)=f\circ \sigma_{h^{-1}} \in H^0 (C, \OO(h\cdot D + m \sum gP)).\]

From this we conclude that $E_m$ is a $(n-g+1+rm)$ dimensional
vector bundle on $\Pic^n C$ with a fiber preserving $G$ action on
it.
\end{proof}

This means that $E_m$ is a $G$-equivariant vector bundle on $\Pic^n
C$.  Now we would like to check that this group action is compatible
with the $G$ action on $\sym^{n+rm} C$.

\begin{claim}
$ [ \mathbb{P}(E_m,\sigma_m) ] = [\sym^{n+rm} C, \overline{\sigma}]
.$
\end{claim}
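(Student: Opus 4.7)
The plan is to produce an explicit $G$-equivariant isomorphism
$\Psi_m : \mathbb{P}(E_m) \xrightarrow{\sim} \sym^{n+rm} C$
and then invoke the fact that isomorphic objects of $\mathfrak{Var}^G_k$ represent the same class in $K_0(\mathfrak{Var}^G_k)$.

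First I would describe $\Psi_m$ on points. A point of $\mathbb{P}(E_m)$ is a pair $([D],[f])$, where $[D] \in \Pic^n C$ and $[f]$ is a line in the fiber $\varpi_m^{-1}([D]) = H^0(C,\OO(D+m\sum_{g\in G}g\cdot P))$. Sending $([D],[f])$ to the effective divisor of zeros of $f$, which has degree $n+rm$, is the classical identification of the projectivized Picard bundle with the symmetric product; this recovers the isomorphism of varieties $\mathbb{P}E_m \cong \sym^{n+rm} C$ already invoked from \cite{Ar}, composed with $\phi_m$ on the base.

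Next I would verify $G$-equivariance by chasing divisors. For $h \in G$ and $f \in H^0(C,\OO(D+m\sum g\cdot P))$, the preceding claim showed $\sigma_m(h,f) = f\circ \sigma_{h^{-1}}$ lies in $H^0(C,\OO(h\cdot D + m\sum g\cdot P))$, and the computation $(f\circ \sigma_{h^{-1}}) = \sum h\cdot P_i - \sum h\cdot Q_i$ shows that the zero divisor of $\sigma_m(h,f)$ equals $h$ applied to the zero divisor of $f$ (using that $m\sum_{g\in G}g\cdot P$ is fixed as a divisor under the $G$-action, since the $G$-action simply permutes the terms). Therefore
\[ \Psi_m(\sigma_m(h,[D],[f])) = h\cdot \Psi_m([D],[f]), \]
which is precisely the action $\overline{\sigma}$ on $\sym^{n+rm} C$. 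Hence $\Psi_m$ is an isomorphism in $\mathfrak{Var}^G_k$.

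Finally, the cutting-and-pasting relation in $K_0(\mathfrak{Var}^G_k)$ identifies $G$-equivariantly isomorphic pairs (take $Y = X$), so $[\mathbb{P}(E_m,\sigma_m)] = [\sym^{n+rm} C, \overline{\sigma}]$. The only subtle point — and the step I would be most careful about — is keeping the base identification $\phi_m : \Pic^n C \to \Pic^{n+rm} C$ straight, since $\sigma_m$ is defined with respect to $\Pic^n C$ while the Abel--Jacobi structure map of $\sym^{n+rm} C$ naturally lands in $\Pic^{n+rm} C$; one must check that $\phi_m$ itself is $G$-equivariant (which it is, because $\sum_{g\in G}g\cdot P$ is $G$-invariant as a divisor) so that transporting the $G$-action through $\phi_m^{-1}$ is harmless.
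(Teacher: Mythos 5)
Your proof is correct and follows essentially the same approach as the paper: both define the isomorphism fiberwise by sending a nonzero section (up to scalar) to its divisor of zeros, and both verify $G$-equivariance by computing how the zero divisor transforms under $\sigma_m(h,f) = f \circ \sigma_{h^{-1}}$. Your added remark about the base identification $\phi_m$ being $G$-equivariant is a reasonable precaution, though the paper already established this when $\phi_m$ was introduced.
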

\begin{proof}
We must show that there is a $G$-equivariant isomorphism between \newline
$\mathbb{P}(E_m,\sigma_m)$ and $(\sym^{n+rm} C, \overline{\sigma})$.

Let $[D]\in \Pic^n C$, let $h\in G$, and let $f\in H^0(C,\OO(D +
m\sum gP))$.  Consider the map
\[ \begin{array}{cccc}
     q: & H^0(C,\OO(D + m\sum gP))\setminus \{0\} & \rightarrow & |D+ m\sum gP| \\
      & f & \mapsto & (f)_0
   \end{array}, \]
where $(f)_0$ denotes the divisor of zeros of $f$.  This map is onto
and $q(f)=q(af)$ for all $a \in k^*$.  Thus it induces a bijection
between the projectivization of $H^0(C,\OO(D + m\sum gP))$ and $|D+
m\sum gP|$.  If $(f)_0= P_1 + \ldots P_l$, it is clear that
$(\sigma_m(h,f))_0= hP_1 +\ldots +hP_l$, thus $q$ is $G$-equivariant.  Therefore, we have a $G$-equivariant
isomorphism between $\mathbb{P}E_m$ and $\sym^{n+rm} C$ with their
respective $G$ actions.
\end{proof}

Now that we have constructed equivariant vector bundles
corresponding to the symmetric powers of $C$, we would like to find
a way to relate these vector bundles to each other with the aim of proving rationality.

\section{Relating the vector bundles $E_i$}

\begin{lem}Let $\tau$ be the regular representation of $G$, then as elements of $K_0(\mathfrak{Var}_k^G)$,
\[ [E_1,\sigma_1]=[E_0,\sigma_0][\mathbb{A}^r,\tau].\]
\label{prop:regular}
\end{lem}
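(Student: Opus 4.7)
The plan is to identify $E_1$ as a $G$-equivariant extension of $E_0$ by a trivial rank-$r$ bundle carrying the regular representation $\tau$, and then to deduce the identity in $K_0(\mathfrak{Var}_k^G)$ by Noetherian induction on the base, splitting the extension locally on $G$-invariant affine opens.

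First I will construct a $G$-equivariant short exact sequence of vector bundles on $\Pic^n C$,
\[ 0 \to E_0 \to E_1 \to Q \to 0, \]
arising fiberwise from the sequence $0 \to \OO(D) \to \OO(D + \sum_{g\in G} g\cdot P) \to \bigoplus_{g\in G} \OO(g\cdot P)_{g\cdot P}/\OO_{g\cdot P} \to 0$ on $C$, the cokernel being a skyscraper on the $r$ distinct orbit points. Since $n > 2g-2$ gives $H^1(C, \OO(D)) = 0$, the pushforward along $C \times \Pic^n C \to \Pic^n C$ preserves exactness. I will then observe that $Q$ is $G$-equivariantly trivial: its fiber does not depend on $[D]$, and choosing a uniformizer $t_P$ at $P$ and transporting it through the $G$-action to each $g\cdot P$ yields a $G$-equivariant isomorphism $Q \cong \Pic^n C \times (\mathbb{A}^r, \tau)$, with the $G$-action on $\mathbb{A}^r$ being the regular representation since $G$ acts simply transitively on the $r$ summands.

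Next I will prove the key local statement: on any $G$-invariant affine open $U \subset \Pic^n C$, the restricted sequence splits $G$-equivariantly. A non-equivariant splitting $s : Q|_U \to E_1|_U$ exists because $Q|_U$ is locally free and hence projective over the affine $U$, so $\operatorname{Ext}^1(Q|_U, E_0|_U) = 0$. Averaging, $\bar s = \frac{1}{|G|}\sum_{h\in G} h \cdot s \cdot h^{-1}$ remains a splitting (since the projection $E_1 \to Q$ is $G$-equivariant) and is now $G$-equivariant, using $\operatorname{char}(k) \nmid |G|$. Thus $E_1|_U \cong E_0|_U \oplus Q|_U$ as $G$-equivariant bundles on $U$, so as $G$-varieties $E_1|_U \cong E_0|_U \times_U (U \times \mathbb{A}^r) = E_0|_U \times (\mathbb{A}^r,\tau)$, giving $[E_1|_U] = [E_0|_U][\mathbb{A}^r,\tau]$ in $K_0(\mathfrak{Var}_k^G)$.

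Finally I will globalize by Noetherian induction on a $G$-invariant closed subvariety. Since $\Pic^n C$ is projective with a finite $G$-action, any point lies in a $G$-invariant affine open (take an affine open containing the finite orbit and intersect its $G$-translates), and the same property is inherited by any $G$-invariant closed subvariety. Choosing such a pair $U$ and $Z = \Pic^n C \setminus U$, the scissor relations give $[E_1] = [E_1|_U] + [E_1|_Z]$ and likewise for $[E_0][\mathbb{A}^r,\tau]$. The $U$-piece matches by the local splitting, and the $Z$-piece matches by the inductive hypothesis (applied to the restricted SES on $Z$, whose quotient $Q|_Z$ is still equivariantly trivial). The main obstacle is that the global sequence need not split, since $\operatorname{Ext}^1_{\Pic^n C}(Q, E_0)$ can be nonzero; the essential point is that passing to $K_0(\mathfrak{Var}_k^G)$ lets us split only locally on $G$-invariant affines and then reassemble the local identities using the scissor relations.
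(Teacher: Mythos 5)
Your overall strategy mirrors the paper's: construct the $G$-equivariant short exact sequence $0\to E_0\to E_1\to Q\to 0$ by pushing forward the restriction sequence along $p:C\times\Pic^n C\to\Pic^n C$, split it $G$-equivariantly over $G$-invariant affine opens via Maschke-style averaging, and reassemble via scissor relations with Noetherian induction. That structural plan is correct and essentially what the paper does.

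However, there is a genuine gap in your treatment of the quotient bundle $Q$. You assert that $Q$ is \emph{globally} $G$-equivariantly trivial, $Q\cong\Pic^n C\times(\mathbb{A}^r,\tau)$, on the grounds that ``its fiber does not depend on $[D]$'' and that a uniformizer at $P$ can be transported through the $G$-action. This is not correct. The fiber of $Q$ over $[D]$ is $\bigoplus_{g\in G}(\OO(D))_{gP}\cong\bigoplus_{g\in G}\mathcal{L}|_{(gP,[D])}$, i.e.\ $Q=\bigoplus_{g}p_*\mathcal{L}|_{\{gP\}\times\Pic^n C}$, a direct sum of line bundles on $\Pic^n C$ that do depend on the class $[D]$ through the Poincar\'{e} bundle. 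These line bundles are in general nontrivial (already for an elliptic curve, $\mathcal{L}|_{\{Q\}\times\Pic^0 E}$ varies nontrivially with $Q$), so there is no global trivialization, much less a $G$-equivariant one; a uniformizer at $P$ gives a basis of the cotangent space, not a global frame for these stalk bundles. Moreover, even after restricting to an affine $U$ small enough to trivialize $Q$, it is not automatic that a trivialization can be chosen so that the $G$-action is exactly $\tau$: the action a priori permutes the summands with unit scalar twists, and one must normalize those units away. The paper handles precisely this in its $A$-module analysis, passing to $U=\mathrm{Spec}\,A$, showing the equivariant $A$-module $M$ satisfies $g\cdot x_e=a_g x_g$ with $a_g\in A^\times$, and rescaling the basis to make $a_g=1$, thereby proving $M\cong A\otimes_k k[G]$. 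Your proposal skips this step, and your Noetherian induction also needs to shrink the $G$-invariant affines so that $Q$ (and $K$) are actually free there, which you do not mention. To close the gap you would need to (i) only claim triviality of $Q$ after shrinking $U$, and (ii) supply the normalization argument identifying the resulting $G$-action on the fiber with the regular representation.
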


To prove this we will construct an equivariant
split short exact sequence of vector bundles of the form
\[ 0\rightarrow E_0 \rightarrow E_1 \rightarrow F \rightarrow 0 .\]
This will be used to write the $n+r$ dimensional vector bundle $E_1$
as a direct sum of the $n$ dimensional vector bundle $E_0$ and an
$r$ dimensional vector bundle $F$.  When we pass to a trivialization
of these vector bundles, $F$ will have a $G$ action given by the
regular representation on $\mathbb{A}^r$.

\subsection{An equivariant short exact sequence}

First we construct a short exact sequence of sheaves on $C\times
\Pic^n C$.  Since $\{P\}\times \Pic^n C$ is a divisor on $C\times
\Pic^n C$ we get that
\[ 0 \rightarrow \OO_{C\times \Pic^n C}(-\{P\}\times \Pic^n C) \rightarrow \OO_{C\times \Pic^n C} \rightarrow \OO_{\{P\}\times \Pic^n C} \rightarrow 0 \]
is a short exact sequence.  Thus we have the short exact sequence
\[  0 \rightarrow \OO_{C\times \Pic^n C}(-\sum_{g\in G}\{gP\}\times \Pic^n C) \rightarrow \OO_{C\times \Pic^n C} \rightarrow \bigoplus_{g\in G}\OO_{\{gP\}\times \Pic^n C} \rightarrow 0. \]
Tensoring by $\OO_{C\times \Pic^n C}(\sum_{g\in G}\{gP\}\times
\Pic^n C)$ we get
\[ 0 \rightarrow \OO_{C\times \Pic^n C} \rightarrow \OO_{C\times \Pic^n C}(\sum_{g\in G}\{gP\}\times \Pic^n C) \rightarrow \bigoplus_{g\in G}\OO_{\{gP\}\times \Pic^n C} \rightarrow 0.\]
If $\mathcal{L}$ is the Poincar\'{e} bundle on $C\times \Pic^n C$,
and we tensor this short exact sequence with $\mathcal{L}$ we get
the following short exact sequence
\[ 0 \rightarrow \mathcal{L} \rightarrow \mathcal{L} \otimes \OO(\sum_{g\in G}\{gP\}\times \Pic^n C) \rightarrow \bigoplus_{g\in G} \mathcal{L}_{\{gP\}\times \Pic^n C} \rightarrow 0. \]
Finally if $p:C\times \Pic^n C \rightarrow \Pic^n C$ is the
projection map, we apply $p_*$ to the short exact sequence to get
that
\[ 0 \rightarrow p_*\mathcal{L} \rightarrow p_*\left(\mathcal{L} \otimes \OO(\sum_{g\in G}\{gP\}\times \Pic^n C)\right) \rightarrow \bigoplus_{g\in G} p_*\mathcal{L}_{\{gP\}\times \Pic^n C} \rightarrow 0 \]
is a short exact sequence.
\bigskip

By definition, $E_0 = p_* \mathcal{L}$.  Additionally, 
\[ E_1 \cong p_*\left(\mathcal{L} \otimes \OO(\sum_{g\in G}\{gP\}\times \Pic^n C)\right).\]
Indeed, the fiber of the bundle
$p_*\left(\mathcal{L} \otimes \OO(\sum_{g\in G}\{gP\}\times \Pic^n
C)\right)$ over a point $[D]$ is the vector space
\[ H^0 ( C , \OO(D) \otimes_{\OO_C} \OO(\sum_{g\in G} gP)) \cong H^0 (C, \OO(D + \sum_{g\in G} gP)), \]
just as with $E_1$.
Therefore we may rewrite the above short exact sequence as
\[ 0\rightarrow E_0 \rightarrow E_1 \rightarrow \bigoplus_{g\in G} p_*\mathcal{L}_{\{gP\}\times \Pic^n C} \rightarrow 0. \]

Now we want to consider this short exact sequence with groups acting
on each of the vector bundles and prove that the maps are
equivariant with respect to these actions.  Recall that on $E_0$ and
$E_1$ we have $G$ actions defined by the maps $\sigma_0$ and
$\sigma_1$ respectively.

Since $\bigoplus_{g\in G} p_*\mathcal{L}_{\{gP\}\times \Pic^n C}$ is
the direct sum of $r$ line bundles on $\Pic^n C$, we have that
$\bigoplus_{g\in G} p_*\mathcal{L}_{\{gP\}\times \Pic^n C}$ is an
$r$ dimensional vector bundle on $\Pic^n C$ with the fiber over each
$[D]$ isomorphic to the vector space
\[  \bigoplus_{g\in G} H^0 (C, \OO(D) \otimes_{\OO_C} k_{g\cdot P} ), \]
where $k_{g\cdot P}$ denotes the skyscraper sheaf $k$ on $C$ at the point
$g\cdot P$.  To simplify notation, we will denote this vector bundle by
$\varrho : F \rightarrow \Pic^n C$.

Now we define a $G$ action on this vector bundle.  Let
$[D]\in \Pic^n C$, $f \in H^0(C, \OO(D) \otimes_{\OO_C} k_{g\cdot P} )$,
and $h \in G$.  The action $\rho : G\times F \rightarrow F$ is
defined by $\rho(h,f)= f\circ \sigma_{h^{-1}}$, where
$\sigma_{h^{-1}}$ is the automorphism of $C$ defined by $h^{-1}$ in
$G$.  It is clear that given a section $f \in H^0(C, \OO(D)
\otimes_{\OO_C} k_{g\cdot P} )$ over $[D]$, $f\circ \sigma_{h^{-1}}$ is a
section in $H^0(C, \OO(h\cdot D) \otimes_{\OO_C} k_{hg\cdot P} )$ over $[h\cdot D]$,
thus this defines a fiber preserving group action on $F$.

\begin{claim}
\[\xymatrix{
0 \ar[r] & E_0 \ar[r]^i & E_1 \ar[r]^q &  F \ar[r] & 0} \] is a
$G$-equivariant short exact sequence of locally free sheaves on
Pic$^n C$.
\end{claim}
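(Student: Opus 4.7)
The plan is to split the verification into three independent checks: local freeness of each term, exactness on the right after applying $p_*$, and $G$-equivariance of the maps $i$ and $q$. The first two are nearly automatic from the construction. Since $n > 2g-2$, both $\mathcal{L}$ and $\mathcal{L} \otimes \OO(\sum_g \{gP\} \times \Pic^n C)$ restrict to a line bundle of degree strictly greater than $2g-2$ on every fiber of $p$, so cohomology and base change give that $E_0$ and $E_1$ are locally free of ranks $n-g+1$ and $n-g+1+r$; $F$ is locally free of rank $r$ as a direct sum of $r$ line bundles. The same vanishing $R^1 p_* \mathcal{L} = 0$ promotes the left-exactness of $p_*$ to exactness on the right of the three-term sequence already exhibited.

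For equivariance of $i$, I would work fiberwise over $[D] \in \Pic^n C$. The map is the tautological inclusion $H^0(C, \OO(D)) \hookrightarrow H^0(C, \OO(D + \sum_g gP))$ of global sections, viewing $f$ simply as a function with an enlarged allowed pole divisor. Because both $\sigma_0(h,-)$ and $\sigma_1(h,-)$ act by precomposition with $\sigma_{h^{-1}}$, the identity $i(\sigma_0(h,f)) = f \circ \sigma_{h^{-1}} = \sigma_1(h, i(f))$ is immediate.

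The main obstacle, mild as it is, is equivariance of $q$, because the $G$-action on $F$ simultaneously permutes the $r$ skyscraper summands and pulls sections back via $\sigma_{h^{-1}}$. The plan is to verify the commutation componentwise. Over $[D]$, the map $q$ evaluates $f \in H^0(C, \OO(D + \sum_g gP))$ at each $gP$, producing a tuple $(f|_{gP})_{g \in G}$; meanwhile $\rho$ sends the $g$-summand over $[D]$ to the $hg$-summand over $[h\cdot D]$ by precomposition with $\sigma_{h^{-1}}$. Both $\rho(h,q(f))$ and $q(\sigma_1(h,f))$ land in $F|_{[h\cdot D]}$, and matching their $hg$-th components reduces to the pointwise identity $(f \circ \sigma_{h^{-1}})(hg \cdot P) = f(g \cdot P)$. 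This yields $q \circ \sigma_1(h,-) = \rho(h,-) \circ q$ and completes the equivariance check.
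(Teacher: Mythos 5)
Your proposal is correct and follows essentially the same route as the paper: you verify equivariance fiberwise by checking that the left and right squares commute, with both $\sigma_0$ and $\sigma_1$ acting by precomposition with $\sigma_{h^{-1}}$ making the left square immediate, and the evaluation map $q$ intertwining the actions because $(f\circ\sigma_{h^{-1}})(hg\cdot P)=f(g\cdot P)$. The one thing you add beyond the paper's proof of the claim proper is an explicit justification of exactness and local freeness via $R^1 p_*\mathcal{L}=0$ and cohomology-and-base-change — the paper treats these as already established in the construction preceding the claim and devotes the proof only to the equivariance check.
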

\begin{proof}
To show that the short exact sequence is also $G$-equivariant, we
must show that the following diagram commutes:
\[ \xymatrix{
0 \ar[r]  & G\times E_0 \ar[r]^{id_G\times i} \ar[d]^{\sigma_0} & G\times E_1 \ar[r]^{id_G\times q} \ar[d]^{\sigma_1} & G\times F\ar[r] \ar[d]^{\rho} & 0  \\
0 \ar[r] & E_0 \ar[r]^i & E_1 \ar[r]^q &  F \ar[r] & 0}. \]

The left square of this diagram commutes.  Indeed, let $h\in G$ and
$x\in E_0$.  If $\varpi_0(x)=[D]$ in $\Pic^n C$, we have that $x$ is
a section $f \in H^0(C,\OO(D))$.  So we have
\begin{eqnarray*}
i \circ \sigma_0(h,f) & = & i\circ f\circ \sigma_{h^{-1}}\\
& = & f\circ \sigma_{h^{-1}},
\end{eqnarray*}
which means that
\begin{eqnarray*}
\sigma_1\circ (id_G\times i) (h,f) & = & \sigma_1 (h,f)= f\circ \sigma_{h^{-1}}\\
& = & i \circ \sigma_0(h,f),
\end{eqnarray*}
proving that $\sigma_1\circ (id_G\times i) = i \circ \sigma_0$.

The right square of this diagram also commutes.  Indeed, let $h\in
G$ and $x\in E_1$.  If $\varpi_1(x)=[D]$ in $\Pic^n C$, we have that
$x$ is a section $f \in H^0(c,\OO(D+ \sum g\cdot P))$.  First notice that
$q: E_1 \rightarrow F$ is the evaluation map so that $q(f)$ is equal
to the $r$-tuple $(f(g\cdot P))_{g\in G}$.  So we have
\begin{eqnarray*}
q\circ \sigma_1 (h,f) & = & q (f\circ \sigma_{h^{-1}})\\
& = & (f\circ \sigma_{h^{-1}}(g\cdot P))_{g\in G},
\end{eqnarray*}
which means that
\begin{eqnarray*}
\rho \circ (id_G \times q)(h,f) & = & \rho (h, (f(gP))_{g\in G})\\
& = & (f\circ \sigma_{h^{-1}}(gP))_{g\in G}\\
& = & q\circ \sigma_1 (h,f),
\end{eqnarray*}
proving that $\rho \circ (id_G \times q) = q\circ \sigma_1 (h,f)$.
\end{proof}
\bigskip

Next, we find an affine $G$-invariant open subset of $\Pic^n C$ for which the vector bundles $E_0$, $E_1$, and $F$ are trivial over it.
Let $U_0, U_1, U_2$ be an affine open subsets of $\Pic^n C$ so that
$\varpi_0^{-1}(U_0)\cong U_0\times \mathbb{A}^{n+1-g}$,
$\varpi_1^{-1}(U_1)\cong U_1\times \mathbb{A}^{n+1-g + r}$, and
$\varrho^{-1}(U_2)\cong U_2\times \mathbb{A}^r$.  If $W = U_0 \cap
U_1 \cap U_2$, $W$ is an affine non-empty open subset of $\Pic^n C$
and each of the above vector bundles are trivial over it.  Note that
$W$ is non-empty because $\Pic^n C$ is an irreducible variety.  Then
let

\begin{eqnarray}
\label{eq:trivial} U & = & \bigcap_{g\in G} g\cdot W
\end{eqnarray}
which is non-empty and open because $G$ is finite and $\Pic^n C$ is
irreducible.  This $U$ has all of our desired properties, it is an
affine $G$-invariant open subset of $\Pic^n C$ so that each of the
vector bundles are trivial over it.

Restricting the equivariant short exact sequence to the vector
bundles on $U$, we get a sequence of maps
\[ \xymatrix{
 U\times \mathbb{A}^{n-g+1} \ar[r]^{i} & U\times
\mathbb{A}^{n-g+1+r} \ar[r]^{q} & U\times \mathbb{A}^r },\] which
corresponds to the following short exact sequence of free sheaves on
Pic$^n C$
\[ \xymatrix{
0 \ar[r] & E_0|_U \ar[r]^{i} & E_1|_U\ar[r]^{q} & F|_U \ar[r] &
0}.\]

\subsection{A splitting for the short exact sequence}

Our next step is to show that the above short exact sequence has a
$G$-equivariant splitting.  To do this we follow a variation of the
proof of Maschke's Theorem from basic linear representation theory.

Consider the projection map $P: U\times \mathbb{A}^{n-g+1+r}
\rightarrow U\times \mathbb{A}^{n-g+1}$.  This map is not
$G$-equivariant, so we define an equivariant version of $P$.  Define
$P': U\times \mathbb{A}^{n-g+1+r} \rightarrow U\times
\mathbb{A}^{n-g+1}$ with
\[ P'([D],x) = \frac{1}{r} \sum_{g\in G} \sigma_1(g^{-1},-)\circ P \circ \sigma(g,-)([D],x)\]
for all $[D]\in U$ and $x\in \mathbb{A}^{n-g+1+r}$, so that
\[ P' = \frac{1}{r}\sum_{g\in G} g^{-1}\cdot P \cdot g.\]  Addition makes sense here because $P'([D],x)=([D],x')$ for some $x' \in \mathbb{A}^{n-g+1}$, meaning that this addition takes place in the vector space $\mathbb{A}^{n-g+1}$ over $[D]$ in $U$, and this map is clearly $G$-equivariant and onto.  Notice that since the characteristic of the ground field $k$ does not divide the order of $G$, we have that $r$ is invertible in $k$, so that it makes sense to write $\frac{1}{r}$.

Next, define
\[ K = \left\{ ([D],x) \in U \times \mathbb{A}^{n-g+1+r} : P'([D],x)=([D],0) \right\}.\]
This is a $G$-invariant closed subvariety of $U\times
\mathbb{A}^{n-g+1+r}$.  Indeed, this is a closed subvariety because
it is given by a closed algebraic condition.  It is $G$-invariant
because given $([D],x)\in K$,
\[ P'\sigma_1(h,([D],x)) = \sigma_1(h,P'([D],x)) = \sigma_1(h,([D],0)),\]
and $\sigma_1(h,([D],0)) = ([h \cdot D],0)$ because the zero section of
$H^0(C,\OO(D+\sum g\cdot P))$ maps to the zero section of $H^0(C,\OO(h\cdot D+
\sum g\cdot P)$ under precomposition with $\sigma_{h^{-1}}$, the action of
$h$.

Furthermore, $K$ is an $r$ dimensional vector bundle on $U$ via the
projection map $K \rightarrow U$. Indeed, the map $P':   U\times
\mathbb{A}^{n-g+1+r} \rightarrow U\times \mathbb{A}^{n-g+1}$ of
trivial vector bundles is associated to a map $P'^*: \mathcal{F}
\rightarrow \mathcal{G}$ of locally free sheaves $F$ and $G$
associated to the respective vector bundles.  The sheaf $ker P'^*$
is a locally free sheaf, which is associated to some vector bundle.
But, because of the way $K$ was defined, $K$ is this vector bundle
associated to $ker P'^*$.  In particular, we see that $K$ is in fact
a vector bundle.  Additionally, let $[D]\in U$ and consider the
linear transformation $P'_{[D]} : \mathbb{A}^{n-g+1+r}\rightarrow
\mathbb{A}^{n-g+1}$ with $P'_{[D]}(x)=x'$ where
$P'([D],x)=([D],x')$.  Since $P'_{[D]}$ is onto, we have that the
kernel of $P'_{[D]}$ has dimension $r$.  Therefore, the fiber of $K$
over $[D]$ is the $r$ dimensional vector space ker $P'_{[D]}$,
meaning that $K$ is an $r$ dimensional vector bundle over $U$.

As we found a $G$-equivariant open affine $U$ for which all of the
vector bundles are trivial over $U$, we shall redefine $U$ to
include a trivialization for $K$ as well.

Now let us consider the equivariant map \[q: U\times
\mathbb{A}^{n-g+1+r}\cong U\times \left( \mathbb{A}^{n-g+1} \oplus
\mathbb{A}^r\right) \rightarrow U\times \mathbb{A}^r. \]  From the
exactness of this sequence we know that $q$ maps $U\times
\mathbb{A}^{n-g+1}$ to $U\times \{0\}$.  This means that the
restriction of $q$ to $U\times \mathbb{A}^r$ must be an equivariant
isomorphism.  In particular, we have the desired equivariant
splitting for the short exact sequence. Simply define the splitting
to be the inverse of $q$ restricted to $U\times \mathbb{A}^r$.

Summing up we have found a decomposition of $U\times
\mathbb{A}^{n-g+1+r}$ into equivariant summands, i.e.
\[ U\times \mathbb{A}^{n-g+1+r} \cong U\times \left( \mathbb{A}^{n-g+1} \oplus \mathbb{A}^r\right).\]

\subsection{The regular representation of $G$}

Finally, we must analyze the vector bundle $F$.  We will find that
when restricted to the trivialization $U$, $F\cong U\times
\mathbb{A}^r$ is an equivariant isomorphism where the action of $G$
on $\mathbb{A}^r$ is the regular representation.

Let us examine the vector bundle $\varrho : F \rightarrow \Pic^n C$
carefully.  If we restrict our attention to $U=$ Spec $A$ in $\Pic^n
C$ for some $k$-algebra $A$, we see that this vector bundle is associated to some
$G$-equivariant $A$-module $M$ of rank $r$, because the vector
bundle $\varrho : F \rightarrow \Pic^n C$ preserves fibers under the
action of $G$.

\begin{lem}
$M$ is isomorphic to $A\otimes_k k[G]$ as a $G$-equivariant
$A$-module.
\end{lem}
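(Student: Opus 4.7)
The plan is to exploit the direct sum decomposition $F = \bigoplus_{g\in G} F_g$, where $F_g := p_*\mathcal{L}_{\{gP\}\times \Pic^n C}$ is the $g$-th line bundle summand, whose fiber over $[D]\in \Pic^n C$ is the one-dimensional space $H^0(C,\OO(D)\otimes_{\OO_C} k_{g\cdot P})$. First I would shrink the affine $G$-invariant open $U = \text{Spec } A$, if necessary, so that each individual line bundle $F_g|_U$ is free of rank one; this can be arranged by first shrinking to trivialize each $F_g$ and then intersecting over $G$-translates exactly as in (\ref{eq:trivial}). Correspondingly, $M$ decomposes as $M = \bigoplus_{g\in G} M_g$ of $A$-modules, with each $M_g$ free of rank one.

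Next I would read off how $G$ permutes the summands. A section of $F_g$ is supported at $g\cdot P$ by construction, so under $\rho(h,-) = (-)\circ \sigma_{h^{-1}}$ its support is pulled back to $(hg)\cdot P$ and the resulting section lies in $F_{hg}$. Consequently $h\cdot M_g = M_{hg}$, so the set of summands forms a single $G$-orbit on which $G$ acts via the left regular representation.

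The final step is to choose a $G$-compatible trivialization. Fix a free $A$-module generator $e_e$ of $M_e$ (where $e$ denotes the identity of $G$) and set $e_g := g\cdot e_e$ for each $g\in G$. Using the equivariance identity $h\cdot(a\cdot m) = (h\cdot a)\cdot (h\cdot m)$ together with the fact that $g$ acts on $A$ by an automorphism, one checks that $M_g = g\cdot M_e = g\cdot(A\cdot e_e) = A\cdot e_g$, so $e_g$ is a free generator of $M_g$. Hence the map
\[ \varphi : A\otimes_k k[G] \longrightarrow M, \qquad a\otimes g \longmapsto a\cdot e_g, \]
is an $A$-module isomorphism, where $A\otimes_k k[G]$ carries the $G$-action combining the given action on $A$ with the left regular representation on $k[G]$, i.e.\ $h\cdot(a\otimes g) = (h\cdot a)\otimes hg$. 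The $G$-equivariance of $\varphi$ then reduces to the identity $h\cdot e_g = e_{hg}$, which holds by the very definition of $e_g$. The main obstacle is keeping the two $G$-actions (on $A$ and on $M$) straight: one is not free to trivialize the $M_g$ independently, and the fact that transporting a single trivialization of $M_e$ by the $G$-action automatically trivializes all of $M$ is precisely what produces the regular representation structure.
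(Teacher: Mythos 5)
Your proof is correct and follows essentially the same approach as the paper: identify the skyscraper-induced line-bundle decomposition, observe via the support argument that $G$ permutes the rank-one summands by left translation, and transport a single generator to produce the equivariant isomorphism with $A\otimes_k k[G]$. The only difference is cosmetic — you define $e_g := g\cdot e_e$ at the outset (so the relation $h\cdot e_g = e_{hg}$ is automatic and you only need to check that $e_g$ generates $M_g$), whereas the paper starts from an arbitrary basis $\{x_g\}$, shows $g\cdot x_e = a_g x_g$ with $a_g$ a unit, and renormalizes; your bookkeeping is arguably a little cleaner, but the mathematical content is identical.
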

\begin{proof}
We start by investigating the $G$ action on the $A$-module $M$.
Since $M$ is a $G$-equivariant $A$-module, we have the following property:
\[ g \cdot am = (g\cdot a)(g\cdot m) \text{ for all }a\in A, m\in M, g\in G.\]

Since the rank of $M$ is $r$ and the order of $G$ is $r$, we can let
the set $\{x_g\}$ be a basis for $M$ so that
\[M\cong \oplus_{g\in G} Ax_g.\]
We now compute the $G$ action on $M$.

\begin{claim} $g\cdot x_{e} =  x_{g}$.
\end{claim}
\begin{proof}
Recall from the definition of $\rho :G\times F \rightarrow F$ that
given a global section $f \in H^0(C, \OO(D)\otimes k_{P})$,
$\rho(g,f)=f\circ \sigma_{g^{-1}}$ is a global section in
$H^0(C,\OO(gD)\otimes k_{gP})$.  This means that
\[g\cdot  x_{e} = a_g x_{g},\text{ for some }a_g\in A.\]
Also, since
\begin{eqnarray*}
 x_e & = & (g^{-1}g) \cdot x_e\\
& = & g^{-1}\cdot (a_g x_g) \\
& = & (g^{-1}\cdot a_g)(g^{-1}\cdot x_g ) ,
\end{eqnarray*}
we know that $g^{-1}\cdot a_g$ must be invertible in $A$ with
\[  g^{-1}\cdot x_g = (g^{-1}\cdot a_g)^{-1} x_e .\]
Additionally, we know that the action of $g^{-1}$ on $A$ is a ring
automorphism.  Therefore, $a_g$ is invertible in $A$.

Thus, without loss of generality we may assume that $a_g=1$ for all
$g\in G$.  Indeed, the following map:
\[ \begin{array}{cccc}
     \phi : & M=\bigoplus_{g\in G} Ax_g & \rightarrow & M=\bigoplus_{g\in G} Ax_g \\
      & x_g & \mapsto & a_g^{-1}x_g
   \end{array}\]
defines a $G$-equivariant $A$-module isomorphism.

Finally, we conclude that
\[ g\cdot   x_{e} =  x_{g}\]
as desired.
\end{proof}

\bigskip

Let $g,h\in G$, and let $a\in A$.  We can now compute the action of
$g$ on a general element of $M$.
\begin{eqnarray*}
g\cdot (a x_h) & = & (g\cdot a)(g\cdot  x_h )\\
& = & (g\cdot a) ((gh) \cdot x_e) \\
& = & (g\cdot a) x_{gh} \\
& = & (g\cdot a) x_{gh}.
\end{eqnarray*}

Consider the $G$-equivariant $A$-module, $A \otimes_k k[G]$, where
$k[G]$ denotes the group ring $G$ over $k$.  The $G$-action on this
$A$-module is given as follows:
\[ g \cdot ( a \otimes h) = (g\cdot a) \otimes (gh).\]
From this, it is easy to see that the map
\[ \begin{array}{cccc}
     \phi : & A \otimes_k k[G] & \rightarrow & M \\
      & a\otimes g & \mapsto & a  x_g
   \end{array}\]
is a $G$-equivariant isomorphism of $A$-modules.
\end{proof}

Using the isomorphism $\phi$, we have a $G$-equivariant isomorphism
of schemes given by
\[ \phi^* :U \times_k \mathbb{A}^r \rightarrow \varrho^{-1}(U),\]
and as elements of $K_0( \mathfrak{Var}_k^G)$ we have
\[ [\varrho^{-1}(U),\rho] = [U,\tilde{\sigma}][ \mathbb{A}^r,\tau], \]
where $\tau$ denotes the regular representation of $G$.

\bigskip
\bigskip

We now return to our equivariant short exact sequence of free
sheaves on Pic$^n C$
\[ \xymatrix{
0 \ar[r] & E_0|_U \ar[r]^{i} & E_1|_U\ar[r]^{q} & F|_U \ar[r] &
0}.\] Since this short exact sequence is split, we have an
equiviariant isomorphism
\[ \varphi : U\times \mathbb{A}^{n-g+1+r} \rightarrow U\times \left(\mathbb{A}^{n-g+1}\oplus \mathbb{A}^r\right).\]
We can define an isomorphism using $\varphi$ as follows:
\[ \begin{array}{cccc}
     \psi : & U\times \mathbb{A}^{n-g+1+r} & \rightarrow & U\times \mathbb{A}^{n-g+1}\times \mathbb{A}^r \\
      & ([D],(x,y)) & \mapsto & ([D],x,y)
   \end{array}.\]
Since this map is equivariant we may write
\[ [U\times \mathbb{A}^{n-g+1+r},\sigma_1]= [U\times \mathbb{A}^{n-g+1},\sigma_0][\mathbb{A}^r,\tau] \]
as elements of $K_0( \mathfrak{Var}_k^G)$. Finally, recalling that
$U$ is a trivialization for the vector bundles $\varpi_1:E_1
\rightarrow \Pic^n C$ and $\varpi_0:E_0 \rightarrow \Pic^n C$, we
have that
\begin{eqnarray*}
 [ \varpi_1^{-1}(U),\sigma_1] & = & [U\times \mathbb{A}^{n-g+1+r},\sigma_1]\\
 & = & [U\times \mathbb{A}^{n-g+1},\sigma_0][\mathbb{A}^r,\tau]\\
 & = & [\varpi_0^{-1}(U),\sigma_0][\mathbb{A}^r,\tau].
\end{eqnarray*}

\subsection{Application to the Grothendieck ring}

Finally we apply this local argument to the entirety of the vector
bundles $E_0$ and $E_1$.

Consider $X = \Pic^n C \setminus U$ a closed possibly reducible
subvariety of $\Pic^n C$ of dimension strictly less than $U$.  We
have
\begin{eqnarray*}
[E_1,\sigma_1] & = & [\varpi_1^{-1}(U),\sigma_1] + [\varpi_1^{-1}(X),\sigma_1]\\
& = & [U\times \mathbb{A}^{n-g+1+r},\sigma_1] + [\varpi_1^{-1}(X),\sigma_1]\\
& = & [U\times \mathbb{A}^{n-g+1},\sigma_0][\mathbb{A}^r,\tau] +
[\varpi_1^{-1}(X),\sigma_1].
\end{eqnarray*}

If we can find a dense affine open $G$-invariant $W$ in $X$ so that
the vector bundles $E_0$, $E_1$, and $F$ are trivial on $W$, we can
repeat the argument above to break this sum in the Grothendieck ring down further.

First, we construct the open set $W$.  Let $X_1,\ldots, X_m$ denote
the distinct irreducible components of $X$.  For each $X_i$ we may
consider the vector bundles $E_0$, $E_1$, and $F$ on $X_i$.  Let
$U_i$ be open affine subsets of $X_i$ so that each of the vector
bundles are trivial on $U_i$.  Given $g\in G$, $g\cdot U_j \cap U_i$
is either empty or dense in $X_i$, so given $g\in G$ we define
\[ W_i^g = \bigcap_{\alpha \in N_i^g} g\cdot U_\alpha \subset X_i,\]
where $N_i^g = \{ \alpha \ | \ g\cdot U_{\alpha} \cap U_i \neq
\emptyset \} \subset \{1,2,\ldots,m\}$.  Then define
\[ W_i = \bigcap_{g\in G} W_i^g \subset X_i.\]
Finally, we must make the $W_i$ disjoint.  Notice that the
intersections of the components $X_i$ are closed proper subvarieties
of the $X_i$'s.  To make these disjoint we simply subtract off the
intersections of the $X_i's$ and the $G$ orbits of these closed
subvarieties.  Since $G$ is finite and the $X_i$ are distinct
irreducible components, this is a finite collection of closed proper
subvarieties.  We define sets $W_i'$ to be the sets $W_i$ with these closed proper subvarieties removed.  The $W_i'$ are still  non-empty open affine subsets of $X_i$ with all
of the desired properties.  Now define $W = \cup W_i'$, which is
affine, open, $G$-invariant, and a trivialization for the vector
bundles $E_0$, $E_1$, and $F$.

We repeat the arguments of the previous subsection verbatim to conclude
that
\begin{eqnarray*}
[\varpi_1^{-1}(W),\sigma_1] & = & [W\times \mathbb{A}^{n-g+r+1},\sigma_1]\\
& = & [W\times \mathbb{A}^{n-g+1},\sigma_0] [\mathbb{A}^r,\tau].
\end{eqnarray*}
Thus we have $[E_1,\sigma_1] $
\begin{eqnarray*}
 = & [U\times \mathbb{A}^{n-g+1},\sigma_0][\mathbb{A}^r,\tau] + [W\times \mathbb{A}^{n-g+1},\sigma_0] [\mathbb{A}^r,\tau] + [\varpi_1^{-1}(X\setminus W),\sigma_1] \\
 = & [\mathbb{A}^r,\tau]\left( [\varpi_0^{-1}(U),\sigma_0]+[\varpi_0^{-1}(W),\sigma_0]\right) + [\varpi_1^{-1}(X\setminus W),\sigma_1]\\
 = & [\mathbb{A}^r,\tau] [\varpi_0^{-1}(U \cup W),\sigma_0] +
[\varpi_1^{-1}(X\setminus W),\sigma_1],
\end{eqnarray*}
with $X\setminus W$ a closed subvariety of $X$ of dimension strictly
less than $X$.

We can repeat this process until the closed subvariety we are left
with is a finite collection of points, i.e. a 0 dimensional
subvariety.  Let $Y = \{ Q_1, \ldots, Q_m \}$ be this collection of
points in $\Pic^n C$. This leave us with the following formula in
$K_0(\mathfrak{Var}_k^G)$:
\[ [E_1,\sigma_1] = [\mathbb{A}^r,\tau] [\varpi_0^{-1}(\Pic^n C \setminus Y),\sigma_0] + [\varpi_1^{-1}(Y),\sigma_1] \]
But again, we see that
\begin{eqnarray*}
[\varpi_1^{-1}(Y),\sigma_1] & = & [Y\times \mathbb{A}^{n-g+1},\sigma_0][\mathbb{A}^r,\tau]\\
& = & [\varpi_0^{-1}(Y),\sigma_0][\mathbb{A}^r,\tau].
\end{eqnarray*}
Indeed, since $Y$ is a finite collection of points we have that $Y$
is an affine \newline $G$-invariant subvariety of $\Pic^n C$ with
$\varpi_1^{-1}(Y)\cong Y\times \mathbb{A}^{n-g+1+r}$.  This means
that we may still apply the previous subsections arguments to this
case.

Therefore,
\begin{eqnarray*}
 [E_1,\sigma_1] & = & [\mathbb{A}^r,\tau] \left( [\varpi_0^{-1}(\Pic^n C \setminus Y),\sigma_0] + [\varpi_0^{-1}(Y),\sigma_0] \right) \\
 & = & [\varpi_0^{-1}\left((\Pic^n C \setminus Y)\cup Y\right),\sigma_0][\mathbb{A}^r,\tau]\\
 & = & [\varpi_0^{-1}(\Pic^n C),\sigma_0][\mathbb{A}^r,\tau]\\
 & = & [E_0,\sigma_0][\mathbb{A}^r,\tau],
 \end{eqnarray*}
completing the proof of Lemma~\ref{prop:regular}.

\bigskip

We can immediately conclude the following corollary, whose proof is
obvious.

\begin{cor}
\label{cor:regular} Let $n > 2g-2$, let $\varpi_m : E_m \rightarrow
\Pic^n C$ be the vector bundles described previously for $m \geq 0$,
and let $\tau:G\times \mathbb{A}^r$ denote the regular
representation of $G$. Then $[E_m,\sigma_m] =
[E_0,\sigma_0][\mathbb{A}^r,\tau]^m$ as elements of
$K_0(\mathfrak{Var}_k^G)$.
\end{cor}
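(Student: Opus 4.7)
The plan is to induct on $m$. The base case $m = 0$ is trivial since both sides equal $[E_0,\sigma_0]$, and the case $m = 1$ is exactly Lemma~\ref{prop:regular}. For the inductive step I would establish
\[ [E_{m+1},\sigma_{m+1}] = [E_m,\sigma_m][\mathbb{A}^r,\tau], \]
by rerunning the argument of Lemma~\ref{prop:regular} with the roles of $E_0, E_1$ played by $E_m, E_{m+1}$. Multiplying by $[\mathbb{A}^r,\tau]^m$ and invoking the inductive hypothesis $[E_m,\sigma_m] = [E_0,\sigma_0][\mathbb{A}^r,\tau]^m$ will then yield the desired equality.

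Concretely, I would take the short exact sequence
\[ 0 \to \OO_{C\times \Pic^n C} \to \OO\Bigl(\sum_{g\in G} \{gP\}\times \Pic^n C\Bigr) \to \bigoplus_{g\in G} \OO_{\{gP\}\times \Pic^n C} \to 0 \]
used in the proof of Lemma~\ref{prop:regular}, and tensor it instead with $\mathcal{L} \otimes \OO\bigl(m\sum_{g\in G} \{gP\}\times \Pic^n C\bigr)$. Applying $p_*$ produces a $G$-equivariant short exact sequence
\[ 0 \to E_m \to E_{m+1} \to F_m \to 0, \]
where $F_m$ is again a rank $r$ bundle whose fiber over $[D]$ is the direct sum of skyscraper contributions at the orbit $\{gP\}_{g \in G}$. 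The identification of the middle pushforward with $E_{m+1}$ needs $H^1\bigl(C, \OO(D + (m+1)\sum gP)\bigr) = 0$ for every $[D]\in \Pic^n C$, which follows from $n + r(m+1) > 2g-2$; this is guaranteed for all $m \geq 0$ by the standing hypothesis $n > 2g-2$.

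From this point on, the rest of the proof of Lemma~\ref{prop:regular} carries over verbatim: the $G$-action on the quotient $F_m$ still permutes the $r$ skyscraper summands via the regular representation, so $F_m$ trivializes $G$-equivariantly to $U \times \mathbb{A}^r$ with action $\tau$ over any suitable affine open $U \subset \Pic^n C$; the Maschke-style averaging argument (valid because $p \nmid r$) produces a $G$-equivariant splitting over $U$; and the cutting-and-pasting bookkeeping stratifies $\Pic^n C$ down to a zero-dimensional residual set that is handled directly. The only place one must pay attention, and the sole potential obstacle, is confirming the two pushforward identifications for general $m$; once this Serre-vanishing check is made, the entire argument of the previous section applies without modification, and the induction closes.
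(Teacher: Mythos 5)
Your proposal is correct and is exactly the argument the paper intends: the paper declares the corollary's proof ``obvious,'' and the evident argument is your induction on $m$, where the inductive step $[E_{m+1},\sigma_{m+1}] = [E_m,\sigma_m][\mathbb{A}^r,\tau]$ is obtained by rerunning the proof of Lemma~\ref{prop:regular} with $E_m, E_{m+1}$ in place of $E_0, E_1$. Your explicit check that $n + r(m+1) > 2g-2$ guarantees the required $H^1$-vanishing (so that the pushforwards are locally free of the expected rank) is a point the paper leaves implicit, and it is the only place the argument could have failed.
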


This allows to break up the vector bundles $E_m$ into $E_0$ and
copies of regular representations of $G$ in the Grothendieck ring.
This will give us the necessary tools to prove the rationality of
the motivic zeta function once we find a way to apply this
result to the projectivizations of $E_m$.

\section{Relating the vector bundles $E_i$ to the Picard bundle}

Now we need to find a way to apply the above methods to the
projectivizations of the vector bundle $E_m$, the symmetric power, $\sym^{n+rm} C$.  To
do this we will need to assume that the group $G$ is abelian.
Notice that we have not needed this assumption until now.

Let $U$ be the open affine subset of $\pic^n C$ as defined above in
equation~\ref{eq:trivial}.  Recall that the vector bundle $E_m$ is
trivial over $U$, so that we have an equivariant isomorphism
\[ \phi: \varpi_m^{-1}(U) \rightarrow (U\times \mathbb{A}^{n-g+1}) \times (\mathbb{A}^r)^m, \]
giving an equation in $K_0(\mathfrak{Var}_k^G)$ like that in
Corollary~\ref{cor:regular},
\[ [\varpi_m^{-1}(U),\sigma_m ] = [U\times \mathbb{A}^{n-g+1},\sigma_0] [\mathbb{A}^r,\tau]^m.\]

Since the group $G$ is abelian, the regular representation $\tau:
G\times \mathbb{A}^r \rightarrow \mathbb{A}^r$ can be reduced
completely into 1 dimensional representations.  That is to say there
exists 1 dimensional representations $\lambda_1, \lambda_2,\ldots,
\lambda_r$ and an equivariant isomorphism
\[ \psi: \mathbb{A}^r \rightarrow (\mathbb{A}^1)^r ,\]
giving the following equation in $K_0(\mathfrak{Var}_k^G)$:
\[ [\mathbb{A}^r,\tau] = [\mathbb{A}^1,\lambda_1][\mathbb{A}^1,\lambda_2]\ldots [\mathbb{A}^1,\lambda_r].\]

Defining $\Phi = (id_{\varpi_0^{-1}(U)} \times \psi) \circ \phi$, we
get an equivariant isomorphism
\[ \Phi : \varpi_m^{-1}(U) \rightarrow (U\times \mathbb{A}^{n-g+1}) \times (\mathbb{A}^1)^{rm},\]
giving the following equation in $K_0(\mathfrak{Var}_k^G)$:
\[ [\varpi_m^{-1}(U),\sigma_m ] = [U\times \mathbb{A}^{n-g+1},\sigma_0][\mathbb{A}^1,\lambda_1]^m[\mathbb{A}^1,\lambda_2]^m\ldots [\mathbb{A}^1,\lambda_r]^m.\]

Since the fiber bundle $\pi'_m:\sym^{n+rm}C\rightarrow \Pic^n C$ is
the projectivization of the vector bundle $E_m$, $\Phi$ gives rise
to an equivariant isomorphism
\[ \overline{\Phi} : \pi'^{-1}(U) \rightarrow U\times \mathbb{P}(\mathbb{A}^{n-g+1}\times \mathbb{A}^{rm})\cong U\times \mathbb{P}^{n+rm-g}.\]

A point in $\pi'^{-1}(U)$ can thus be represented as
\[ (u, [y_0:y_1:\ldots:y_{n-g}:x_{11}:x_{12}:\ldots :x_{1r}:x_{21}:\ldots :x_{rr}]) \]
with $u\in U$, $y_i,x_{ij}\in k$.  Also assume that for all $g\in
G$,
\[ \overline{\sigma}(g,(u,[y_i:x_{ij}])) = (u',[y_i':\lambda_i(g, x_{ij})]),\]
for some $u'\in U$, $y_i'\in k$, and where $\lambda_i$ are the 1
dimensional representations introduced above.  We may make this
assumption because the map $\Phi$ diagonalized the representation of
$G$.

Let $X_{11}$ be a closed subset of $X:=U\times
\mathbb{P}(\mathbb{A}^{n-g+1}\times \mathbb{A}^{rm})$ defined by
$\{x_{11}=0\}$.  $X_{11}$ and $X\setminus X_{11}$ are both
$G$-invariant because the action of $G$ acts diagonally on the
$x_{ij}$.  In the Grothendieck ring we get
\[ [\pi'^{-1}(U),\overline{\sigma}] = [X,\overline{\sigma}] = [X_{11},\overline{\sigma}]+ [X\setminus X_{11},\overline{\sigma}].\]

We would now like to get a clear picture of the two spaces
$X_{11}$ and $X\setminus X_{11}$.  The space $X_{11}$ is isomorphic to $U\times \mathbb{P}^{n+rm-g-1}$
and its coordinates are the same as $X$, but with the $x_{11}$
coordinate deleted.

To understand what $X\setminus X_{11}$ looks like we define an
isomorphism
\[ \begin{array}{cccc}
     \varphi : & X\setminus X_{11} & \rightarrow & U\times \mathbb{A}^{n-g+1}\times \mathbb{A}^{rm-1} \\
      & (u,[y_i:x_{ij}]) & \mapsto & (u,\frac{y_i}{x_{11}},\frac{x_{ij}}{x_{11}})
   \end{array}. \]
The codomain is the familiar $\varpi_0^{-1}(U)\times
\mathbb{A}^{rm-1}\cong U\times \mathbb{A}^{n-g+1} \times
\mathbb{A}^{rm-1}$.  In order to insure that $\varphi$ is an
equivariant isomorphism, we endow the codomain with an appropriate
$G$ action.  Therefore, in the Grothendieck ring,
\[ [X\setminus X_{11},\overline{\sigma}]  \]
\[ =[U\times \mathbb{A}^{n-g+1},\frac{1}{\lambda_1}\otimes \sigma_0][\mathbb{A}^1,\frac{1}{\lambda_1}\otimes \lambda_1]^{m-1}[\mathbb{A}^1,\frac{1}{\lambda_1}\otimes \lambda_2]^m\ldots [\mathbb{A}^1,\frac{1}{\lambda_1}\otimes \lambda_r]^m .\]  The action $1/\lambda_1 \otimes \lambda_1$ is really the trivial action, and from now on we will write $1$ to denote the trivial group action.

The next step is to break up $X_{11}$ into pieces just as we did
$X$.  We let $X_{12}$ be the closed subset of $X_{11}$ defined by
$\{x_{12}=0\}$.  In the Grothendieck ring, we have that
\[ [X_{12},\overline{\sigma}] = [U\times \mathbb{P}^{n+rm-g-2},\overline{\sigma}] \]
and
\[ [X_{11}\setminus X_{12},\overline{\sigma}]  \]
\[ =[U\times \mathbb{A}^{n-g+1},\frac{1}{\lambda_1}\otimes \sigma_0][\mathbb{A}^1,1]^{m-2}[\mathbb{A}^1,\frac{1}{\lambda_1}\otimes \lambda_2]^m\ldots [\mathbb{A}^1,\frac{1}{\lambda_1}\otimes \lambda_r]^m .\]

We can repeat this process $rm$ times, until we reach the following
equation:
\[ [X,\overline{\sigma}] = [U\times \mathbb{P}^{n-g},\overline{\sigma}]  \]
\[ +\sum_{j=1}^{r} [U\times \mathbb{A}^{n-g+1},\frac{1}{\lambda_j}\otimes \sigma_0] \left(\sum_{i=1}^m [\mathbb{A}^1,1]^{m-i}\prod_{k = j+1}^r[\mathbb{A}^1,\frac{1}{\lambda_j}\otimes \lambda_{k}]^m\right) . \]

This brings us to an important claim which allows us to break up the
space $\sym^{n+rm}C$ in the Grothendieck ring.

\begin{claim}
\label{claim:symbreak}
\[ [\sym^{n+rm} (C,\sigma)] = [\sym^n (C,\sigma)]   \]
\[ +\sum_{j=1}^{r} [E_0,\frac{1}{\lambda_j}\otimes \sigma_0] \left(\sum_{i=1}^m [\mathbb{A}^1,1]^{m-i}\prod_{k = j+1}^r[\mathbb{A}^1,\frac{1}{\lambda_j}\otimes \lambda_{k}]^m\right) .\]
Define \[ \Omega_{ij} :=
[\mathbb{A}^1,1]^{m-i}\prod_{k = j+1}^r[\mathbb{A}^1,\frac{1}{\lambda_j}\otimes \lambda_{k}]^m, \] which simplifies this equation to say
\[ [\sym^{n+rm} (C,\sigma)] = [\sym^n (C,\sigma)] +  \sum_{j=1}^{r} [E_0,\frac{1}{\lambda_j}\otimes \sigma_0] \sum_{i=1}^m \Omega_{ij}  .\]
\end{claim}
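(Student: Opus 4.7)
My plan is to promote the local identity just derived on the trivializing open $U \subset \Pic^n C$ to a global identity on all of $\Pic^n C$, by using exactly the stratification-and-cutting-and-pasting argument from the proof of Lemma~\ref{prop:regular}. The local identity already established reads
\[ [\pi'^{-1}(U),\overline{\sigma}] = [U\times \mathbb{P}^{n-g},\overline{\sigma}] + \sum_{j=1}^{r}[U\times\mathbb{A}^{n-g+1},\tfrac{1}{\lambda_j}\otimes \sigma_0]\sum_{i=1}^m \Omega_{ij}. \]
The two ingredients on the right already have global meaning: since $\mathbb{P}E_0\cong \sym^n C$ equivariantly (the $m=0$ case of the earlier claim that $[\mathbb{P}(E_m,\sigma_m)]=[\sym^{n+rm}C,\overline{\sigma}]$), a trivialization of $E_0$ over $U$ gives an equivariant isomorphism $\pi_0'^{-1}(U)\cong U\times \mathbb{P}^{n-g}$ where $\pi_0':\sym^n C\to \Pic^n C$ is the Picard bundle projection; likewise $\varpi_0^{-1}(U)\cong U\times \mathbb{A}^{n-g+1}$ with the $\sigma_0$-action.

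Next I would observe that the same argument used in Lemma~\ref{prop:regular} produces, on any closed $G$-invariant subvariety of $\Pic^n C$, a dense open $G$-invariant affine subset that simultaneously trivializes all the bundles in sight (here $E_m$, $E_0$, $\mathbb{P}E_m$, $\mathbb{P}E_0$, and $F$). One therefore obtains a finite $G$-invariant stratification $\Pic^n C=\bigsqcup_{\alpha} U_\alpha$ by locally closed affine pieces, over each of which the formula displayed above holds verbatim (with $U$ replaced by $U_\alpha$). The noetherian termination of this process is identical to the one given in Lemma~\ref{prop:regular}.

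Finally I would sum the local identities over the stratification. Applying the cutting-and-pasting relation to $\sym^{n+rm}C$ and to $\sym^n C$ gives $[\sym^{n+rm}(C,\sigma)]=\sum_\alpha [\pi'^{-1}(U_\alpha),\overline{\sigma}]$ and $[\sym^n(C,\sigma)]=\sum_\alpha [\pi_0'^{-1}(U_\alpha),\overline{\sigma}]$, and similarly $[E_0,\tfrac{1}{\lambda_j}\otimes \sigma_0]=\sum_\alpha [\varpi_0^{-1}(U_\alpha),\tfrac{1}{\lambda_j}\otimes \sigma_0]$. Since the classes $\Omega_{ij}$ do not depend on the stratum, they factor out of the sum, giving
\[ [\sym^{n+rm}(C,\sigma)] = [\sym^n(C,\sigma)] + \sum_{j=1}^{r}[E_0,\tfrac{1}{\lambda_j}\otimes \sigma_0]\sum_{i=1}^{m}\Omega_{ij}, \]
which is the claim.

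The only subtle point is making sure the stratification can be chosen compatibly for both $E_m$ and $E_0$ at once (and for $F$, if needed to run the local computation on each stratum). This was already handled in the proof of Lemma~\ref{prop:regular} by intersecting trivializing opens and taking $G$-orbits, so no new difficulty arises; the main obstacle is just bookkeeping to verify that the local decomposition truly glues, and this reduces to the observation that $\Omega_{ij}$ is a stratum-independent class in $K_0(\mathfrak{Var}_k^G)$.
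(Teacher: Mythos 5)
Your proposal is correct and follows essentially the same route as the paper: the paper also stratifies $\Pic^n C$ into $G$-invariant affine pieces $U_0, U_1, \ldots, U_N$ exactly as in the proof of Lemma~\ref{prop:regular}, applies the local decomposition $[\pi_m'^{-1}(U_k),\overline{\sigma}] = [U_k\times\mathbb{P}^{n-g},\overline{\sigma}] + \sum_j [U_k\times\mathbb{A}^{n-g+1},\tfrac{1}{\lambda_j}\otimes\sigma_0]\sum_i\Omega_{ij}$ on each stratum, and sums using $\sum_k[U_k]=[\Pic^n C]$. Your explicit observation that the stratum-independent factors $\Omega_{ij}$ pull out of the sum, and that the summed classes reassemble into $[\sym^n(C,\sigma)]$ and $[E_0,\tfrac{1}{\lambda_j}\otimes\sigma_0]$, is precisely the bookkeeping the paper performs.
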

\begin{proof}
This proof will follow the same strategy as appears in Section 5.4.  Let $U_0=U$ in $\Pic^n C$ be the open affine trivialization
defined in this section.  We have that
\[ [\sym^{n+rm} C,\overline{\sigma}] = [ \pi_m'^{-1}(U_0),\overline{\sigma}] + [\pi_m'^{-1}(\Pic^n C \setminus U_0),\overline{\sigma}].\]
Since $U_0$ is an open dense subset of $\Pic^n C$, $\Pic^n C
\setminus U$ is a closed subvariety of $\Pic^n C$ of dimension
strictly less than $U_0$.  This variety may not be irreducible, so
we construct a $G$-invariant dense open affine subset of $\Pic^n C
\setminus U$ exactly as we did in Section 5.4, call this open set
$U_1$.  We may repeat previous arguments to
conclude that
\begin{eqnarray*}
 [\pi_m'^{-1}(U_1),\overline{\sigma}] & = & [U_1\times \mathbb{P}^{n+rm-g},\overline{\sigma}]\\
 & = & [U_1\times \mathbb{P}^{n-g},\overline{\sigma}] + \sum_{j=1}^{r} [U_1\times \mathbb{A}^{n-g+1},\frac{1}{\lambda_j}\otimes \sigma_0] \sum_{i=1}^m \Omega_{ij} .
\end{eqnarray*}
Next we define a $G$-invariant open affine subset $U_2$ in the same
way we defined $U_1$, whose compliment has even smaller dimension in
$\Pic^n C$. This process repeats $N$ times until we are left with a
0 dimensional variety, where this equation will still hold.

Since
\[ \sum_{k=0}^N [U_k] = [\Pic^n C], \]
we compute that $[\sym^{n+rm} (C,\sigma)]$
\begin{eqnarray*}
= & \sum_{k=0}^N [ \pi_m'^{-1}(U_k),\overline{\sigma} ] \\
 = & \sum_{k=0}^N \left( [U_k\times \mathbb{P}^{n-g},\overline{\sigma}] + \sum_{j=1}^{r} [U_l\times \mathbb{A}^{n-g+1},\frac{1}{\lambda_j}\otimes \sigma_0] \sum_{i=1}^m \Omega_{ij} \right)\\
  = & [\sym^n (C,\sigma)] +  \sum_{j=1}^{r} [E_0,\frac{1}{\lambda_j}\otimes \sigma_0] \sum_{i=1}^m \Omega_{ij},
\end{eqnarray*}
completing the proof.
\end{proof}

\section{Rationality of the motivic zeta-function}

Finally, we are left with proving the rationality of the power
series.  Let $n_i = 2g + i$ for $i = 1,2,\ldots , r$.  Since $n_i > 2g-2$ for
all $i$, we have that $[\sym^{n_i} C]=[\Pic^{n_i} C][\p^{n_i - g}]$
in the Grothendieck ring for varieties.

Now we rewrite the motivic zeta-function for $(C, \sigma)$ as
\begin{eqnarray*}
 \zeta_{(C,\sigma)}(t) & = & \sum_{n=0}^{\infty} [\sym^n (C,\sigma)]t^n  \\
 & = & \sum_{n=0}^{2g}[\sym^n (C,\sigma)]t^n + \sum_{i=1}^r t^{n_i} \left(\sum_{m=0}^{\infty} [\sym^{n_i+rm} (C,\sigma)] t^{rm} \right) .
\end{eqnarray*}
Thus it suffices to prove the following claim.

\begin{claim}
$\sum_{m=0}^{\infty} [\sym^{n_i+rm} (C,\sigma)] t^{rm}$ is rational
for each $n_i$.
\end{claim}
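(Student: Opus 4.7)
The plan is to substitute the decomposition from Claim~\ref{claim:symbreak} into the power series, peel off the $m$-independent contribution, and then collapse the remaining double sum into a product of geometric series in $K_0(\mathfrak{Var}_k^G)[[t]]$.

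First I would invoke Claim~\ref{claim:symbreak} (applicable since $n_i = 2g+i > 2g-2$) to rewrite, for every $m \geq 0$,
\[
[\sym^{n_i+rm}(C,\sigma)] = [\sym^{n_i}(C,\sigma)] + \sum_{j=1}^{r} [E_0,\tfrac{1}{\lambda_j}\otimes \sigma_0] \sum_{\ell=1}^{m} \Omega_{\ell,j},
\]
with the convention that the inner sum vanishes when $m = 0$. Substituting into the power series and separating the two contributions gives
\[
\sum_{m=0}^{\infty}[\sym^{n_i+rm}(C,\sigma)]\,t^{rm} = \frac{[\sym^{n_i}(C,\sigma)]}{1-t^r} + \sum_{j=1}^{r} [E_0,\tfrac{1}{\lambda_j}\otimes \sigma_0]\, S_j(t),
\]
where $S_j(t) := \sum_{m=1}^{\infty} \bigl(\sum_{\ell=1}^{m}\Omega_{\ell,j}\bigr) t^{rm}$. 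The first summand is manifestly rational, since $1-t^r$ has constant term $1$ and is therefore invertible in $K_0(\mathfrak{Var}_k^G)[[t]]$.

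Next I would reduce each $S_j(t)$ to closed form. Let $L := [\mathbb{A}^1, 1]$ and $B_j := \prod_{k=j+1}^{r}[\mathbb{A}^1,\tfrac{1}{\lambda_j}\otimes \lambda_k]$ (with the empty product $B_r$ understood as $1$), so that $\Omega_{\ell,j} = L^{m-\ell} B_j^m$; reindexing by $s = m-\ell$ gives $\sum_{\ell=1}^{m}\Omega_{\ell,j} = B_j^m \sum_{s=0}^{m-1} L^s$. Swapping the order of summation and evaluating the resulting geometric series yields
\begin{align*}
S_j(t) &= \sum_{m=1}^{\infty} B_j^m t^{rm} \sum_{s=0}^{m-1} L^s = \sum_{s=0}^{\infty} L^s \sum_{m=s+1}^{\infty} (B_j t^r)^m \\
&= \sum_{s=0}^{\infty} L^s \cdot \frac{(B_j t^r)^{s+1}}{1 - B_j t^r} = \frac{B_j t^r}{(1 - B_j t^r)(1 - L B_j t^r)}.
\end{align*}
Every geometric-series manipulation above is legitimate in $K_0(\mathfrak{Var}_k^G)[[t]]$ because each denominator has constant term $1$.

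Assembling the pieces over the common denominator $(1-t^r)\prod_{j=1}^{r}(1 - B_j t^r)(1 - L B_j t^r)$ exhibits the power series as an explicit ratio of polynomials, which is precisely the required rationality. The only genuine obstacle is bookkeeping for the twisted actions $\tfrac{1}{\lambda_j}\otimes(\cdot)$; once Claim~\ref{claim:symbreak} is granted, the rationality reduces to the elementary formal identity $\sum_{m \geq 1}\sum_{\ell=1}^{m} x^m y^{m-\ell} = \sum_{s \geq 0} y^s \sum_{m > s} x^m$ applied to the commuting elements $x = B_j t^r$ and $y = L$ of the power series ring.
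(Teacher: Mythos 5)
Your proof is correct and follows essentially the same strategy as the paper: substitute Claim~\ref{claim:symbreak}, separate off the $m$-independent term, and collapse the remaining double sum over $\Omega_{\ell,j}$ into a product of geometric series. If anything your treatment of the first summand is cleaner -- you correctly obtain $[\sym^{n_i}(C,\sigma)]/(1-t^r)$, whereas the paper's displayed computation slips to $[\sym^{n_i}(C,\sigma)]t^r/(1-t)$, which misidentifies $\sum_{m\geq 0} t^{rm}$ with $t^r\sum_{m\geq 0} t^m$ (a harmless typo since both are rational); the closed form you derive for $S_j(t)$ agrees with the paper's $\Omega t^r(1-\Omega t^r)^{-1}(1-\Omega L t^r)^{-1}$.
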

\begin{proof}
For notational convenience let $n=n_i$. Using
Claim~\ref{claim:symbreak}, we get
\begin{eqnarray*}
 \sum_{m=0}^{\infty} [\sym^{n+rm} (C,\sigma)] t^{rm} & = & \sum_{m=0}^{\infty}[\sym^n (C,\sigma)]t^{rm} \\
 & & +\sum_{m=1}^{\infty} \sum_{j=1}^{r} [E_0,\frac{1}{\lambda_j}\otimes \sigma_0] \sum_{i=1}^m\Omega_{ij}t^{rm}.
\end{eqnarray*}
It is easy to see that the first summand is rational.  Indeed,
\begin{eqnarray*}
 \sum_{m=0}^{\infty}[\sym^n (C,\sigma)]t^{rm} & = & [\sym^n (C,\sigma)]t^r \sum_{m=0}^{\infty}t^m \\
 & = & \frac{[\sym^n (C,\sigma)]t^r}{1-t}.
\end{eqnarray*}
We rewrite the second summand in the following way,
\[ \sum_{m=1}^{\infty}\ \sum_{j=1}^{r} [E_0,\frac{1}{\lambda_j}\otimes \sigma_0] \sum_{i=1}^m\Omega_{ij}t^{rm} =
\sum_{j=1}^{r} [E_0,\frac{1}{\lambda_j}\otimes
\sigma_0]\sum_{m=1}^{\infty}  \sum_{i=1}^m\Omega_{ij}t^{rm}.\] It
suffices to show that the following power series is rational
\[\sum_{m=1}^{\infty} \sum_{i=1}^m\Omega_{ij}t^{rm} \text{ for }j=1,2,\ldots, r.\]
This will be shown with the following series of equalities.
\[ \sum_{m=1}^{\infty} \sum_{i=1}^m\Omega_{ij}t^{rm} = \sum_{m=1}^{\infty}\sum_{i=1}^m [\mathbb{A}^1,1]^{m-i}\prod_{k = j+1}^r[\mathbb{A}^1,\frac{1}{\lambda_j}\otimes \lambda_{k}]^m t^{rm}  \]
\[ =\sum_{m=1}^{\infty} \left(1 + [\mathbb{A}^1,1] + \ldots + [\mathbb{A}^1,1]^{m-1}\right)\left(\prod_{k = j+1}^r[\mathbb{A}^1,\frac{1}{\lambda_j}\otimes \lambda_{k}]\right)^mt^{rm}.\]
Setting $\Omega = \prod_{k = j+1}^r[\mathbb{A}^1,\frac{1}{\lambda_j}\otimes \lambda_{k}]$, we get 
\begin{eqnarray*}
\sum_{m=1}^{\infty} \sum_{i=1}^m\Omega_{ij}t^{rm} & = & \sum_{m=1}^{\infty} \left(1 + [\mathbb{A}^1,1] + \ldots + [\mathbb{A}^1,1]^{m-1}\right)\Omega^mt^{rm} \\
 & = & \sum_{m=1}^{\infty} \Omega^mt^{rm} + [\mathbb{A}^1,1] \sum_{m=2}^{\infty} \Omega^mt^{rm} + [\mathbb{A}^1,1]^2 \sum_{m=3}^{\infty} \Omega^mt^{rm} + \ldots \\
 & = & \Omega t^r\big( \sum_{m=0}^{\infty}\Omega^mt^{rm} + \Omega[\mathbb{A}^1,1]t^r\sum_{m=0}^{\infty}\Omega^mt^{rm} +   \\
 &  & \Omega^2[\mathbb{A}^1,1]^2 t^{2r}\sum_{m=0}^{\infty}\Omega^mt^{rm}+\ldots \big) \\
 & = & \Omega t^r \sum_{m=0}^{\infty} (\Omega t^r)^m \sum_{k=0}^{\infty} (\Omega [\mathbb{A}^1,1]t^r)^k  \\
 & = & \Omega t^r \left(\frac{1}{1 - \Omega t^r} \right)\left(\frac{1}{1- \Omega [\mathbb{A}^1,1] t^r}\right).
\end{eqnarray*}
This concludes the proof of the claim, which completes the proof of
the main theorem.
\end{proof}

\section{Comments about a Generalization}

The assumption that $G$ is abelian is unneeded for most of the proof
of this theorem.  For this reason, one can conjecture that this result generalizes to any finite group.  I believe that the obstacle in proving the general
result can be summed up with the following conjecture:

\begin{conj}
Let $V$ be a vector space over $k$, and let $\sigma$ be a linear
action of $G$ on $V$.  Then the power series \[ \sum_{n=0}^{\infty}
\mathbb{P}( (V,\sigma)^{\oplus r} )t^r \] is rational in the
Grothendieck ring for varieties with group actions.
\end{conj}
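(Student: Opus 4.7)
The plan is to adapt the equivariant cut-and-paste strategy of Section 5 directly to the projective bundles $\mathbb{P}(V^{\oplus n})$. For each $1 \leq j \leq n$, the inclusion $V^{\oplus(j-1)} \hookrightarrow V^{\oplus j}$ as the first $j-1$ summands is $G$-invariant, producing a $G$-equivariant stratification
\[
  \mathbb{P}(V^{\oplus n}) \;=\; \mathbb{P}(V) \,\sqcup\, \bigsqcup_{j=2}^n S_j, \qquad S_j := \mathbb{P}(V^{\oplus j}) \setminus \mathbb{P}(V^{\oplus(j-1)}).
\]
The ``last-block'' projection $[v_1,\dots,v_j] \mapsto [v_j]$ realizes each $S_j$ as a $G$-equivariant vector bundle of rank $(j-1)\dim V$ over $\mathbb{P}(V)$, non-equivariantly isomorphic to $V^{\oplus(j-1)} \otimes \OO_{\mathbb{P}(V)}(1)$, which already gives $[S_j] = [\mathbb{P}(V)] \cdot [V]^{j-1}$ in the ordinary Grothendieck ring.

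The decisive step is the equivariant upgrade
\[
  [S_j, \bar\sigma] \;=\; [\mathbb{P}(V), \bar\sigma] \cdot [V,\sigma]^{j-1} \qquad \text{in } K_0(\mathfrak{Var}_k^G).
\]
To establish it I would mimic Section 5.4 verbatim: first construct a dense affine $G$-invariant open $U \subset \mathbb{P}(V)$ over which the tautological bundle $\OO(-1)$ admits a $G$-equivariant nowhere-vanishing section, giving a $G$-equivariant trivialization $S_j|_U \cong U \times V^{\oplus(j-1)}$ with the expected fiber representation, so that the piece $[U,\bar\sigma]\cdot[V,\sigma]^{j-1}$ peels off; then iterate the same argument on the strictly lower-dimensional closed complement, as in the inductive dimension-reduction at the end of Section 5.4, until only a zero-dimensional residue remains. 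Summing the resulting identities over $j$ collapses to
\[
  [\mathbb{P}(V^{\oplus n}),\bar\sigma] \;=\; [\mathbb{P}(V),\bar\sigma] \sum_{j=1}^n [V,\sigma]^{j-1},
\]
and reorganizing the generating series produces the manifestly rational expression
\[
  \sum_{n=0}^\infty [\mathbb{P}(V^{\oplus n}),\bar\sigma]\, t^n \;=\; \frac{[\mathbb{P}(V),\bar\sigma]\,t}{(1-t)\bigl(1 - [V,\sigma]\, t\bigr)}
\]
in the sense of Definition 3.1.

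The principal obstacle is producing the $G$-equivariant trivialization of $\OO(-1)$ over an affine $G$-invariant open $U \subset \mathbb{P}(V)$, equivalently a $G$-equivariant regular section of the punctured cone $V\setminus\{0\}\to\mathbb{P}(V)$ over $U$. In the abelian case one could diagonalize $V = \bigoplus_i (\mathbb{A}^1,\lambda_i)$ and strip off coordinates one at a time as in Section 6, reducing to the one-dimensional representations already handled by Lemma 3.2; for a genuinely non-abelian $G$ the fiber $V^{\oplus(j-1)}$ no longer splits into lines, and a naive Maschke-style averaging of a local section of $\OO(-1)$ can collapse to the zero section, so the abelian argument does not transfer directly. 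Overcoming this appears to require either a stratification of $\mathbb{P}(V)$ by $G$-orbit type — reducing equivariant triviality on each stratum to a similar problem for the corresponding stabilizer subgroup — or an intrinsic statement that every $G$-equivariant line bundle on a suitable $G$-variety becomes equivariantly trivial after sufficient $G$-invariant cut-and-paste, which is essentially the content the conjecture is trying to isolate.
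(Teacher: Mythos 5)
The paper offers no proof of this statement; it is presented explicitly as a conjecture, with the remark that the very obstacle you identify is what keeps it open. So there is no in-paper argument to compare against, and your own closing assessment --- that the equivariant trivialization step is essentially the conjectural content --- correctly matches the paper's view of the situation.

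That said, one step you assert as established is in fact wrong, and it is worth pinpointing because it is exactly the sort of thing the paper's own Claim~\ref{claim:symbreak} is careful about. The claimed identity $[S_j,\bar\sigma]=[\mathbb{P}(V),\bar\sigma]\cdot[V,\sigma]^{j-1}$ drops the $\OO(1)$ twist: the fiber of $S_j\to\mathbb{P}(V)$ over $[w]$ is $\mathrm{Hom}(kw,V^{\oplus(j-1)})\cong V^{\oplus(j-1)}\otimes\OO(1)\big|_{[w]}$, so even after a $G$-equivariant trivialization of $\OO(-1)$ over a $G$-invariant open $U$ with character $\chi$, the fiber representation is $V^{\oplus(j-1)}\otimes\chi^{-1}$, not $V^{\oplus(j-1)}$. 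This is precisely the analogue of the $\tfrac{1}{\lambda_j}$ twist in Claim~\ref{claim:symbreak}, and it cannot be discarded. For a concrete contradiction, take $G=\mathbb{Z}/2$ acting on $V=\mathbb{A}^1$ by sign: $\mathbb{P}(V^{\oplus 2})=\mathbb{P}^1$ carries the trivial $G$-action, so $S_2\cong\mathbb{A}^1$ with trivial action, whereas your formula gives $[\mathbb{A}^1,\text{sign}]$; these classes differ in $K_0(\mathfrak{Var}_k^G)$ because the fixed-locus homomorphism $(X,\sigma)\mapsto[X^G]\in K_0(\mathfrak{Var}_k)$ sends them to $[\mathbb{A}^1]$ and $[\text{pt}]$ respectively. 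Consequently the final generating-series identity you display is false as stated. On the trivialization problem itself: when $G$ acts generically freely on $\mathbb{P}(V)$, Hilbert~90 applied to $k(\mathbb{P}(V))/k(\mathbb{P}(V))^G$ produces a $G$-invariant rational section of $\OO(-1)$ and hence a $G$-equivariant trivialization over a $G$-invariant dense open, so there the gap can be closed (with the character twist restored). The genuine difficulty is concentrated where the action on $\mathbb{P}(V)$ has a nontrivial kernel $N$ whose fiberwise character on $\OO(-1)$ does not extend to a character of $G$ --- for instance $G=Q_8$ acting through its two-dimensional irreducible representation, where $N=\{\pm 1\}\subset[G,G]$ so that every one-dimensional $G$-representation is trivial on $N$. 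There no shrinking of $U$ can produce even a twisted equivariant trivialization, so any proof of the conjecture must find a decomposition of $\mathbb{P}(V^{\oplus n})$ that avoids trivializing $\OO(-1)$ entirely.
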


The techniques used to prove this conjecture could give us the
necessary tools to apply Corollary~\ref{cor:regular} to the
Picard bundle when the group is non-abelian.  In the proof given
above we used the fact that $G$ is abelian to diagonalize the group
action on the extra part of the vector bundle.  This allowed us to
break up the Picard bundle in a nice way.  If we could find another
way to break up the Picard bundle, without diagonalizing the
representation, we should be able to prove rationality using that method.  This would result in a more general result.

\begin{conj}
Let $G$ be a finite group of order $r$, let $C$ be a non-singular projective curve over an algebraically closed field $k$ of characteristic $0$ or of positive characteristic $p$ with $p \nmid r$, and let $\sigma : G\times C \rightarrow C$ be a group action on $C$.  Then the motivic zeta function
\[ \zeta_{(C,\sigma)}(t) = \sum_{n=0}^{\infty} [\text{Sym}^n (C,\sigma)]t^n\] is rational.
\end{conj}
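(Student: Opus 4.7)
The plan is to follow the abelian proof of Theorem~\ref{thm:main} verbatim until the point where commutativity is actually used, and then to explain how the remaining step reduces to the auxiliary conjecture the author flags immediately afterward. The first observation is that essentially all of Sections~4 and~5 go through unchanged for a non-abelian finite group $G$ with $|G|$ invertible in $k$. The construction of the equivariant vector bundles $\varpi_m : E_m \to \Pic^n C$ (Section~4) is insensitive to commutativity, as is the equivariant short exact sequence $0 \to E_0 \to E_1 \to F \to 0$, its Maschke-style equivariant splitting (which only needs $r = |G|$ invertible in $k$), the identification of the fiber module $M$ with $A \otimes_k k[G]$ as an equivariant $A$-module inside the proof of Lemma~\ref{prop:regular}, and the inductive patching over a stratification of $\Pic^n C$. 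Hence Corollary~\ref{cor:regular}, $[E_m,\sigma_m] = [E_0,\sigma_0][\mathbb{A}^r,\tau]^m$, holds for any finite $G$ with $p \nmid |G|$, where $\tau$ is the regular representation.

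The abelian hypothesis enters only in Section~6, where $\tau$ is diagonalized as $\lambda_1 \oplus \cdots \oplus \lambda_r$ so that each coordinate hyperplane $\{x_{ij} = 0\} \subset \mathbb{P}(E_m|_U)$ is separately $G$-invariant, which drives the telescoping stratification of Claim~\ref{claim:symbreak}. For general $G$ I would replace this diagonalization by the canonical isotypic decomposition $\tau \cong \bigoplus_{i=1}^s V_i^{\oplus d_i}$ with $d_i = \dim V_i$, in which each $V_i$ is irreducible. The natural $G$-invariant replacement stratification of $\mathbb{P}(E_m|_U)$ then zeroes out one entire irreducible copy of some $V_i$ at a time rather than a single coordinate. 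Each complement stratum is a Zariski-locally-trivial bundle whose base is $\mathbb{P}(V_i)$ and whose fibers are affine spaces carrying a $G$-action twisted by the relative position of the chosen copy of $V_i$; only when $d_i = 1$ does this recover the author's splitting into an affine $G$-space times $U$.

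The main obstacle is exactly that, unlike in the abelian case, these strata are not products of affine spaces with diagonal $G$-action, and their classes in $K_0(\mathfrak{Var}_k^G)$ involve terms of the form $[\mathbb{P}(V_i^{\oplus e}), \sigma]$ for various $e$. This is precisely the content of the auxiliary conjecture the author states after the main theorem. Granting that conjecture, one can rerun the telescoping argument of Claim~\ref{claim:symbreak} and the final rationality computation: the resulting double series expressing $\zeta_{(C,\sigma)}(t)$ would collapse into a finite sum of products of rational series, each factor coming either from Corollary~\ref{cor:regular}, from the polynomial tail $\sum_{n=0}^{2g}[\sym^n(C,\sigma)]t^n$, or from the conjectural rationality of $\sum_e [\mathbb{P}(V_i^{\oplus e}), \sigma]\,t^e$. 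The hardest step, and really the only genuinely new part of the argument, is therefore proving this auxiliary conjecture for irreducible $V$ of dimension $d \geq 2$. I would attempt it by a Schur-lemma stratification: view $V^{\oplus n} = \mathrm{Hom}(k^n, V)$, stratify by the rank $\rho$ of the underlying linear map, and exploit the commuting $\mathrm{GL}(n)$-action (which is $G$-equivariantly trivial on the $V$-factor by Schur's lemma) to reduce the rank-$\rho$ stratum modulo scaling to a Grassmannian-like piece carrying a substantially simpler $G$-action, with the lower-rank strata handled by induction on $n$. Organizing the resulting generating series into a rational function in $K_0(\mathfrak{Var}_k^G)[[t]]$ is where I expect essentially all of the remaining technical difficulty to lie.
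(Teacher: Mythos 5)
This statement is not proved in the paper; it is explicitly labeled as a conjecture, and the author's Section~7 discusses only why the abelian proof breaks down and names an auxiliary conjecture as the obstruction. Your write-up correctly reflects this. You accurately identify that Sections~4 and~5, including the construction of the equivariant bundles $E_m$, the Maschke-style splitting, the identification of the fiber module with $A\otimes_k k[G]$, and Corollary~\ref{cor:regular}, use nothing about commutativity beyond $p \nmid |G|$. You also correctly locate the unique place where abelianness is used: the diagonalization of the regular representation $\tau$ into one-dimensional characters, which makes the coordinate hyperplanes $\{x_{ij}=0\}$ individually $G$-invariant and drives the telescoping stratification in Claim~\ref{claim:symbreak}. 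And you correctly recognize that the paper's own stated auxiliary conjecture, the rationality of $\sum_n [\mathbb{P}((V,\sigma)^{\oplus n})]t^n$, is precisely the missing ingredient for irreducible $V$ of dimension at least two.

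However, a proof this is not, and you should be explicit with yourself about that. Two gaps remain. First, even granting the auxiliary conjecture, you do not actually carry out the replacement for Claim~\ref{claim:symbreak}: you observe that the natural isotypic stratification produces strata whose classes involve $[\mathbb{P}(V_i^{\oplus e}),\sigma]$, but you do not show that the resulting sum telescopes into a finite expression of rational series the way the abelian case does. The abelian argument relies on the fact that removing one coordinate at a time leaves a product of the previous $E_0$-piece with affine lines carrying characters; in the isotypic version the "complement" strata you describe, fibered in twisted affine spaces over $\mathbb{P}(V_i)$, need an actual computation in $K_0(\mathfrak{Var}_k^G)$ before one can sum them. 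Second, your proposed attack on the auxiliary conjecture by rank stratification of $\mathrm{Hom}(k^n,V)$ with a Schur-lemma $\mathrm{GL}(n)$-action is a plausible heuristic but is not developed: you would need to check that the rank strata are $G$-invariant, that the quotienting is by a Zariski-locally-trivial torsor, and that the resulting Grassmannian pieces have classes that assemble into a rational series, none of which is immediate. As a reconstruction of the paper's stance, your answer is accurate; as a proof of the conjecture, it is, like the paper itself, incomplete by design.
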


\newpage

\noindent {\bf Acknowledgements}

\bigskip

I would like to thank my advisor, Valery Lunts, for helping me learn algebraic geometry from the ground up and for his constant guidance while working on my thesis from which this result comes.

\newpage

\end{document}